\newtheorem{thm}{Theorem}[section]
\newtheorem*{equivlemma}{Equivalence Lemma}
\newtheorem*{symconj}{Symmetry Conjecture}
\newtheorem{lemma}[thm]{Lemma}
\newtheorem{cor}[thm]{Corollary}
\newtheorem{prop}[thm]{Proposition}
\newtheorem{defn}[thm]{Definition}
\theoremstyle{definition}
\newtheorem{m*N}[thm]{}
\newcommand\ts{\hspace{2em}}
\def\addcontentsline#1#2#3{\relax}
\long\outer\def\demo#1. #2\par{\medbreak\noindent {\bf#1.\enspace}
        {\rm#2}\par\ifdim\lastskip<\medskipamount\removelastskip
        \penalty55\medskip\fi}
\def\bdemo#1. #2\par{\medbreak\noindent {\bf#1.\enspace}{\rm#2}\par}
\def\edemo{\ifdim\lastskip<\medskipamount\removelastskip\penalty55\medskip\
fi}
\def\Me{\mathbb{e}}
\def\Mi{\mathbb{i}}
\def\N{{\mathbb{N}}}
\def\calP{\mathcal{P}}
\newcommand\TI[1]{\textit{\textbf{#1}}}
\begin{document}
\title{Littlewood Polynomials, Spectral-Null Codes, and Equipowerful Partitions}
\author{ Joe Buhler, Shahar Golan, Rob Pratt, and Stan Wagon }

\begin{abstract}
Let $[n]$ denote $\{0,1, ... , n-1\}$.  
A polynomial $f(x) = \sum a_i x^i$ is a Littlewood polynomial (LP) of length $n$ 
if the $a_i$ are $\pm 1$ for $i \in [n]$, and $a_i = 0$ for $i \ge n$.
Such an LP is said to have order $m$ if it is divisible by $(x-1)^m$.
The problem of finding the set $L_m$ of lengths of LPs of order $m$ 
is equivalent to finding the lengths of spectral-null codes of order $m$,  
and to finding $n$ such that $[n]$ admits a partition into two subsets whose first $m$ moments are equal.  
Extending the techniques and results of Boyd and others, we completely determine 
$L_7$ and $L_8$ and prove that 192 is the smallest element of $L_9$.  
Our primary tools are the use of carefully targeted searches using integer 
linear programming (both to find LPs and to disprove their existence for 
specific $n$ and $m$), and an unexpected new concept (that arose out of observed
symmetry properties of LPs) that we call ``regenerative pairs,'' 
which produce infinite arithmetic progressions in $L_m$.  
We prove that for $m \le$ 8, whenever there is an LP of length $n$ 
and order $m$, there is one of length $n$ and order $m$ that is symmetric 
(resp.~antisymmetric) if m is even (resp.~odd).
\end{abstract}
\maketitle

\setcounter{section}{0}
\setcounter{equation}{0}
\setcounter{thm}{0}

\let\thefootnote\relax\footnotetext{ 2000 Primary 11B83, 12D10; Secondary 94B05, 11Y99.
Keywords: Littlewood polynomials, spectral-null code, 
equal power sum partition, multigrade identitty, integer linear programming.}

\let\thefootnote\relax\footnotetext{ 
Joe Buhler: Department of Mathematics, University of Minnesota, Minneapolis, MN 55455; Email: jbuhler@umn.edu.
Shahar Golan: Department of Computer Science, Jerusalem College of Technology, Jerusalem, Israel;
E-mail: sgolan@jct.ac.il.
Rob Pratt: Cary, NC 27513; E-mail: rob.pratt@sas.com.
Stan Wagon: Macalester College, St. Paul, MN 55105; E-Mail: wagon@macalester.edu.
}

\section{Introduction}\label{intro}

The partition of $\{0,1,2,3,4,5,6,7\}$ into the two disjoint sets 
$A=\{0,3,5,6\}$ and $B=\{1,2,4,7\}$ is especially well-balanced ({``}equipowerful{''}) 
in that the first three moments of $A$ are equal to the corresponding moments of $B$; 
i.e., $\sum_{a\in A}a^j=\sum_{b\in B}b^j=\frac{1}{2}\sum_{\, i=0}^{7}i^j$ for $j=0,1,2$. 
Figure 1 illustrates the three identities geometrically. We say that the
bipartition is 3-equipowerful (or has order 3) and length 8.

\begin{figure}[!ht] \label{figure1}
  \centering
    \includegraphics[clip=true,trim = 100 0 0 0,scale=1.00]{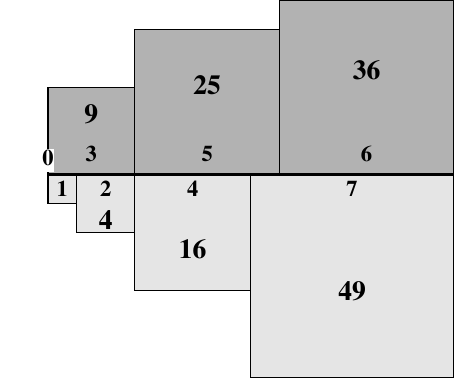}
  \caption{A 3-equipowerful bipartition of length 8; the three power-sums are 4, 14, 70.}
\end{figure}

A bipartition of this kind can be encoded algebraically as a generating function 
in two natural ways: either as a Littlewood polynomial (all coefficients are $\pm 1$; 
we abbreviate this to LP) via $f(x)=\sum_{a\in A}x^a-\sum_{b\in B}x^b$ 
or as the trigonometric polynomial $g(t)=f(\Me^{\Mi t})=\sum_{a\in A}\Me^{\Mi at}-\sum_{b\in B}\Me^{\Mi bt}$, 
all of whose coefficients are $\pm 1$. The fact that the preceding bipartition is 
3-equipowerful is equivalent to $f(x)$ vanishing to order 3 at $x=1$, and also to $g(t)$ vanishing
in $t$ to order 3 at $t=0$.

These trigonometric polynomials correspond to ``spectral-null codes'' which arise in 
signal processing in several important ways, e.g., in encoding digital information 
on media (such as a DVD) where low frequencies need to be suppressed.  Applications
for small~$m$ use efficient algorithms for encoding arbitrary bit strings into (the somewhat
longer) strings in some spectal-null code of order~$m$.  Using codes for larger~$m$ will
require reasonably efficient
encoding/decoding algorithms, and a better understanding of what lengths are possible
for codes of some specific order~$m$.

This latter question, or equivalent versions in other guises, is the central focus of this paper: 
given a positive integer $m$, what are the possible lengths of (equivalently) an equipowerful 
bipartition, a Littlewood polynomial, or a spectral-null code, of order $m$? 
This set will be written
\[
 L_m=\{ n:\text{there is an LP of length } n \text{ and order } m\}.
\]
This has been investigated in the Littlewood polynomial context by, among others, 
Boyd \cite{boyd1, boyd2}, Berend and Golan \cite{golan}, and Freiman and Litsyn \cite{asymptotically}.

The primary tools that we use to significantly extend known results are (1) carefully 
targeted searches that use integer linear programming (ILP), and (2) an unexpected 
concept that we call {``}regenerative pairs,{''} which yields efficient proofs of 
the existence of infinite families of order-$m$ LPs and suggests additional structure 
for LPs of high order.

Section 2 gives the required definitions, $\S $3 states our results, $\S $4 gives the 
background theorems that are needed, $\S $5 has proofs of the negative results, and 
$\S $6 introduces the concept of regenerative pairs, which are used to find infinite 
families of LPs.
The final section states some conjectures that emerge from the data. 
Our work shows how one can use experiments, aided by sophisticated computations 
(ILP) to generate hunches based on patterns, which can be used 
to refine the experiments and also as a guide to proofs. 
One danger of course is that the patterns are not as predictive as one might hope and
we enumerate a series of failed conjectures, including one with a spectacular counterexample
that has order 52 and length~$2^{51}$.

\section{Notation and Definitions}\label{Defs}

We use $[n]$ to denote $\{0,1,\, \dots ,n-1\}$. 
A decomposition $[n]=A\cup B$ into two disjoint subsets is called $m$\textit{-equipowerful}, 
or \textit{of order }$m$, if the first $m$ moments (power-sums) of the sets are equal, 
i.e., $\sum_{a\in A}a^j=\sum_{b\in B}b^j$ for $j=0,1,\dots ,m-1$. 
We write $A\overset{m}{=}B$ when the first $m$ moments of $A$ and $B$ are equal; 
$m$ is called the \textit{order} of the bipartition and $A$ and $B$ are called \textit{witnesses}. 
We always assume that $m$ is positive, so that the two subsets have the same size. 
The equations $A\overset{m}{=}B$ are an example of a {``}multigrade identity{''} and also 
an instance of the general Prouhet--Tarry--Escott problem \cite{tarryescott, prouhet}.

A \textit{Littlewood polynomial} (LP) is a polynomial whose exponents are $[n]$ for
some $n$ and whose coefficients are all $\pm 1$.  The degree of a polynomial $f$ is 
denoted $\deg (f)$; the \textit{length} of a polynomial $f$, denoted $\ell (f)$, 
is the number of its coefficients, i.e., $\ell(f) = \deg (f)+1$. 
An LP $f(x)$ has order $m$ if $(1-x)^m$ divides $f(x)$.  
Note that order~$m$ merely means divisible by $(x-1)^m$, and we will have occasion
to refer to the \textit{exact order} of an LP $f$, meaning the maximum order, i.e., $m$ is
the exact order if not only $(x-1)^m$ divides $f$, but also $(x-1)^{m+1}$ does not divide~$f$.

As will be shown below, equipowerful partitions of $[n]$ of order $m$ and LPs 
of length~$n$ and order $m$ are equivalent ideas.
We will always assume that $m$ is positive, which implies that $n$ is even.
The ideas extend immediately to bipartitions of any interval $I$ of $n$ consecutive
integers (or to LPs with nonzero coefficients in~$I$).
Multiplying an LP by a power of $x$ does not change the power of $x-1$ that divides it, so
it suffices to consider polynomials with $I = [n]$ and,
by the aforementioned equivalence (or a direct proof), it suffices to consider bipartitions of~$[n]$.

We use $\calP (n,m)$ for the set of LPs of length $n$ and order $m$, and $L_m$ for the 
set of lengths of LPs of order $m$: $L_m=\{n:\calP (n,m)\neq \varnothing\}$. 
The initial goal of this paper is to discover as much as possible about $L_m$, for small $m$.

A dual concept is also useful: let $m^*(n)$ be the largest $m$ so that $n\in L_m$. 
For example, $m^*(8)=3$ because the example of $\S $1 gives 
$1-x-x^2+x^3-x^4+x^5+x^6-x^7=(1-x)^3 (1+x)^2 (1+x^2)$, which is in $\calP(8,3)$ and 
a short search shows that $\calP(8,4)$ is empty.

The \textit{join} of LPs $f(x)$ and $g(x)$ is $(f\lor g)(x)=f(x)+x^{\ell (f)}g(x)$.
The \textit{expanded product} of LPs $f$ and $g$, denoted $f\#g$, is $(f\# g)(x)=f(x)g(x^{\ell (f)})$, 
which is an LP; this product is associative but not commutative. 

Symmetry plays a large role in our investigations, so we introduce several concepts 
related to symmetric and antisymmetric LPs. The \textit{reversal} of an LP $f(x)$ 
is $f^*(x)=x^{\deg (f)}f(1/x)$ and $f$ is \textit{symmetric} (or $+$\textit{symmetric}) 
if $f^*=f$ and \textit{antisymmetric} (or $-$\textit{symmetric}) if $f^*=-f$. 
If $s=1$ or $s = -1$, then the $s$\textit{-symmetrization} (or \textit{symmetrization} if $s=1$ 
and \textit{antisymmetrization} if $s=-1$) of an LP $f$ is $S(f)=f\lor (sf^*)$. 

Finally, we introduce a special polynomial that is a seminal example in our story.

\begin{defn} The \textit{Thue--Morse polynomial}, which we denote $\tau_m$, is 
\[
\tau_m(x) = (1-x)(1-x^2)\dots \left(1-x^{2^{m-1}}\right).
\]
\end{defn}

The polynomial $\tau_m$ has length $2^m$ and is $(-1)^m\,$symmetric. 
These polynomials played an important role in the early work because $\tau_m$ has order $m$, 
which means $\tau_m\in \calP (2^m,m)$, $2^m\in L_m$, and $m^*(2^m)\geq m$.

\section{New Results}\label{new}

Work of Boyd \cite{boyd1,boyd2}  and Berend and Golan \cite{littlewood} gave exact descriptions of $L_m$ for $m\leq 6$, 
and also exact values of $m^*(n)$ for $n\leq 167$ and $n=256$; these are included in the
tables below.

Their results for $L_m$ are extended here to exact descriptions for $m = 7$ and $m = 8$, as well
a determination of the elements of $L_9$ that are smaller than 272.  The extensive and
carefully structured computations required to do this will be described in $\S $5 and $\S $6.
The next theorem states the new results precisely, and the full story is given in the
subsequent table.  We let $\N$ denote the set of nonnegative
integers.

\begin{thm}\label{3.1}
$L_7=\{96, 112, 128, 144, 160, 176\} \cup  (192+8\, \N)$ 
and $L_8=\{144\} \cup  (192+16\, \N)$.
Also, the only integers in $L_9$ that are less than 272 are 192 and 240.
\end{thm}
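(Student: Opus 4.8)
The plan is to establish Theorem~\ref{3.1} in three layers: membership (exhibiting LPs of the required lengths), closure under arithmetic progressions, and non-membership (ruling out all other lengths below the relevant thresholds). For the \emph{membership} claims, I would first collect the known ``seed'' LPs of small length and order. The Thue--Morse polynomial $\tau_7 \in \calP(128,7)$ and $\tau_8 \in \calP(256,8)$ are free, and the values $96, 112, 144, 160, 176 \in L_7$ and $144 \in L_8$ would be witnessed by explicit polynomials found by the ILP search described in \S5 (for each such $n$ one simply records a $\pm1$ coefficient vector and checks divisibility by $(x-1)^m$ by evaluating the first $m$ derivatives at $1$, i.e.\ verifying the moment equations directly). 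To get the infinite tails $192 + 8\N \subseteq L_7$ and $192 + 16\N \subseteq L_8$, I would invoke the theory of regenerative pairs from \S6: one produces a single regenerative pair whose associated arithmetic progression has common difference $8$ (resp.\ $16$) and base point $192$, which immediately yields all lengths $\equiv$ the right residue starting from $192$. Here one also needs the small ``extra'' members $200, 208, \dots$ up to $192+8\cdot k$ for the residues not hit by a single progression through $192$ — concretely, $L_7 \supseteq (192 + 8\N)$ requires progressions (or direct LPs) covering every residue class mod~$8$ above $192$, and likewise mod~$16$ for $L_8$; I would assemble these from a finite family of regenerative pairs plus, if needed, finitely many ad hoc LPs, again verified by direct computation.

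The reverse inclusions — that \emph{nothing else} belongs to $L_7$, $L_8$, or to $L_9$ below $272$ — are the heart of the matter and the main obstacle. The strategy is a case analysis on the length $n$. Since $m \ge 1$ forces $n$ even, and since an LP of order $m$ has order at least $1$, standard divisibility constraints (an LP of order $m$ is divisible by $(1+x)^{?}$ considerations, parity of $n$, and the lower bounds $n \ge $ roughly $m^2/2$ or the sharper bounds from Boyd's work quoted in \S4) restrict attention to a finite list of candidate $n$ in each case: for $L_7$ the even $n$ with $96 \le n < 192$ not already listed, namely $n \in \{98,100,\dots\}$ — all even $n$ in $[96,192)$ outside $\{96,112,128,144,160,176\}$ — must be excluded, and for $L_8$ every even $n$ below $192$ except $144$, and for $L_9$ every even $n$ below $272$ except $192$ and $240$. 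For each such candidate $(n,m)$ the claim ``$\calP(n,m) = \varnothing$'' is a finite feasibility question: does there exist a $\pm1$ vector $(a_0,\dots,a_{n-1})$ satisfying the $m$ linear moment equations $\sum_i a_i i^j = 0$ for $j = 0,\dots,m-1$? This is exactly an integer linear (indeed $0/1$ after the substitution $a_i = 2b_i - 1$) feasibility problem, and I would discharge it by running an exact ILP solver to prove infeasibility, exploiting symmetry-breaking (fixing $a_0 = 1$, and — using the symmetry results of this paper, valid for $m \le 8$ — restricting the search to symmetric or antisymmetric vectors, which roughly halves the number of free variables) to keep the computations tractable.

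Two supporting steps make the case analysis finite and honest. First, I must pin down the threshold arguments that bound the candidate list: from \S4 I would quote (or re-derive) the facts that $n \in L_m$ implies $n \ge c_m$ for an explicit $c_m$, and that $L_m$ is eventually ``dense'' in a residue-class sense so that once the progressions $192 + 8\N$ (resp.\ $192+16\N$) are in hand, the only open questions are the finitely many $n$ below $192$ (resp.\ below $272$ for $m=9$). Second, for the positive direction I must verify that the regenerative-pair construction of \S6 genuinely outputs order-$7$ (resp.\ order-$8$) LPs and not merely order-$6$: this is a direct check that the ``regenerative'' defining equations force the extra vanishing derivative at $x=1$. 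The delicate point throughout is \emph{completeness of the ILP infeasibility certificates} — one must be confident that the solver has exhausted the search space (hence the insistence on exact arithmetic and on symmetry reductions that provably lose no solutions, justified by the $m\le 8$ symmetry theorem stated in the abstract); I would present the infeasibility results as a table of $(n,m)$ pairs with the reduced problem size and solver used, mirroring the format of the main results table. Assembling these pieces — seeds, regenerative progressions covering all needed residues, threshold bounds, and the exhaustive ILP infeasibility sweep — yields the three equalities of the theorem.
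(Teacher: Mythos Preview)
Your high-level three-layer plan is right, but there are two genuine gaps on the nonexistence side, and a smaller bookkeeping issue.

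\textbf{Naive ILP is not enough.} You propose discharging each ``$\calP(n,m)=\varnothing$'' by feeding the moment equations directly to an ILP solver. For the hard cases $n=168,184$ (with $m=7$) this is a binary feasibility problem in $168$--$184$ variables with only seven linear constraints and coefficients up to about $184^6$; a frontal attack is not tractable. The paper's actual method is layered: one first uses the arithmetic constraints of \S4 (Lemma~4.2 and Propositions~4.3, 4.4) on the discrepancy vectors $\TI{d}_p$ for small primes $p=2,3,5,\dots$, deducing, e.g., that for $n=184$ one must have $\TI{d}_2=(64,-64)$, $\TI{d}_3\in\{(0,\pm27,\mp27)\}$, etc., and then runs ILP only on the heavily filtered subcases. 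For $n=184$ the process terminates at $p=17$ with no feasible $\TI{d}_{17}$; for $n=168$ it still requires about two days of computation even after filtering $301388$ candidate $\TI{d}_{11}$-vectors down to a few hundred. The modular filtering is the essential missing idea.

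\textbf{The symmetry reduction is circular.} You suggest restricting the infeasibility search to $(-1)^m$-symmetric vectors, citing ``the symmetry results of this paper, valid for $m\le 8$.'' But Theorem~3.2 only says that for each $n$ already shown to lie in $L_m$ there is a symmetric witness; it does \emph{not} establish the Symmetry Conjecture (that every nonempty $\calP(n,m)$ contains a symmetric element), which the paper explicitly leaves open. So ruling out symmetric LPs says nothing about asymmetric ones. The paper's nonexistence arguments in \S5 make no symmetry assumption; symmetry is used only on the existence side, to speed searches and (via regenerative pairs) to preserve it along arithmetic progressions.

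\textbf{Candidate list.} Your list ``all even $n$ in $[96,192)$'' is far too large. Divisibility (Theorem~4.1) forces $8\mid n$ for $n\in L_7$ and $16\mid n$ for $n\in L_8\cup L_9$, not merely $2\mid n$. Combined with the previously known exact values of $m^*(n)$ for $n\le 167$ and $m^*(256)=8$, the only \emph{new} nonexistence facts required are $168,184\notin L_7$; $176\notin L_8$ (plus $200,216,232\notin L_8$, which are immediate from $16\nmid n$); and $208,224\notin L_9$.
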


\begin{table}[ht]\label{lmTab1}
$
\begin{array}{|l|l|} \hline
 m & L_m \\ \hline
 1 & 2+2\, \N \\ \hline
 2 & 4+4\, \N \\ \hline
 3 & 8+4\, \N \\ \hline
 4 & 16+8\, \N \\ \hline
 5 & 32+8\, \N \\ \hline
 6 & \{48\} \;  \cup \;   64+8\, \N \\ \hline
 7 & \{96, 112, 128, 144, 160, 176\} \;  \cup \;  192+8\, \N \\ \hline
 8 & \{144\} \; \cup \;  192+16\, \N \\ \hline 
\end{array}
$
\vspace*{0.1in}
\caption{$L_m$, for $m \le 8$.}
\end{table}

Note that in Table 1 the limiting differences of the sets $L_m$ are 2, 4, 4, 8, 8, 8, 8, 16. 
In fact, the Divisibility condition in Theorem~\ref{basic} below says if $\calP (n,m)\neq \varnothing$ 
then $n$ is divisible by the smallest power of 2 that is strictly bigger than $m$. 
For example, any number in $L_2$ or $L_3$ has to be divisible by 4; and $L_4$ 
through $L_7$ are contained in $8\,\N$. 
Remarkably, this divisibility criterion is not only necessary for membership in $L_m$, 
but it is (almost) sufficient. 
Freiman and Litsyn \cite[Thm.~1]{golan} proved that, roughly speaking, the chance that a 
random LP of length $n$ is divisible by $(x-1)^m$ is about $n^{-m^2/2}$; more precisely,
$\left| \calP (n,m)\right| =c\, (1+o(1))\,  2^n/n^{-m^2/2}$, where the 
constant $c$ depends only on $m$ and the $o(1)$ term goes to zero as $n$ goes to infinity. 
This means that the divisibility condition is true asymptotically in the sense that $L_m$ contains all 
sufficiently large $n$ that are divisible by the appropriate power of 2. 
Unfortunately, the implicit constants in the $o(1)$ term are unknown, so this doesn't 
help us find $L_m$ explicitly for small $m$. 
Consider $L_7$ and $L_8$. The Freiman--Litsyn result implies that $L_7$ contains 
all sufficiently large multiples of 8, and $L_8$ contains all sufficiently large multiples of~16.
Theorem~\ref{3.1} asserts that $L_7$ contains all multiples of 8 that are 192 or greater 
and $L_8$ contains all multiples of 16 that are 192 or greater.
As a side note,
we observe that the Freiman--Litsyn estimate can be used to coarsely estimate the smallest $n$ for which $\calP(n,m)$
is nonempty; we find that for a given~$m$ the smallest element of $L_m$ is, asymptotically, roughly 
equal to $4m^2$.

Our work reveals further structure in equipowerful sets. Anyone working in the area is drawn to the ubiquitous symmetry properties of the
witnessing sets and polynomials. 
That is, whenever $n\in L_m$, it appears that there is a witnessing set that is $(-1)^m$ symmetric. 
This is easy to prove when $m=2$: just apply symmetry to extend all the initial segments 
of the infinite sequence $X=(0,1,2,3,\dots )$, getting $\{0,3\}$, 
$\{0,1,6,7\}$, $\{0,1,2,9,10,11\}$, and so on. Yet this had not been proved even for $m=3$. 
However, we now know that $(-1)^m\,$symmetric
witnesses exist for all entries in our table of~$L_m$.

\begin{thm}\label{3.2}
In all the cases of Table 1, there is a $(-1)^m\,$symmetric LP in $\calP (n,m)$.
\end{thm}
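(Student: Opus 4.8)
The plan is to prove Theorem~\ref{3.2} by combining two ingredients: a case-by-case treatment of the finitely many ``sporadic'' entries of Table~1, and a structural argument handling each of the infinite arithmetic progressions $c + d\,\N$ that appear. For the infinite families, the natural tool is the expanded product $\#$ together with the Thue--Morse polynomials $\tau_k$, which are already known to be $(-1)^k$ symmetric and to lie in $\calP(2^k,k)$. The first thing I would establish is a lemma on how symmetry type behaves under the join and the expanded product: if $f$ is $s$-symmetric of length $\ell(f)$ and $g$ is $t$-symmetric, then $f\# g$ is $st$-symmetric (one checks $(f\#g)^*(x) = x^{\deg f + \ell(f)\deg g} f(1/x) g(x^{-\ell(f)}) = f^*(x)\,(g^*)(x^{\ell(f)}) = st\,(f\#g)(x)$, using $\ell(f\#g)=\ell(f)\ell(g)$), and that order adds: $f\#g$ has order $\ge \text{ord}(f)+\text{ord}(g)$. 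Both facts are routine but should be stated explicitly since the whole argument rests on them.

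With that lemma in hand, the strategy for an infinite family $c + d\,\N \subseteq L_m$ is as follows. I would exhibit, for the base length $c$, a $(-1)^m$ symmetric element $h_c \in \calP(c,m)$ (found by the ILP search of $\S$5--6 and recorded explicitly), and then produce the arithmetic progression by repeatedly forming expanded products with a fixed symmetric LP. Concretely, the difference $d$ in each row is itself a power of $2$ (namely $8$ for $m=7$ and $16$ for $m=8$), so writing $d = 2^k$ with $k \ge m$, the Thue--Morse polynomial $\tau_k \in \calP(2^k,k) \subseteq \calP(d,m)$ is $(-1)^k$ symmetric. For $m=8$ we have $d=16=2^4$, so $\tau_4$ is $(+1)$-symmetric, and $h_c \# \tau_4 \# \cdots \# \tau_4$ ($j$ copies) lies in $\calP(c+jd,\,m)$ and is $(+1)=(-1)^8$ symmetric --- exactly the required type. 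For $m=7$, $d=8=2^3$ and $\tau_3$ is $(-1)^3 = -1$ symmetric, so again $h_c \# \tau_3^{\#j}$ is $(-1)^{j}\cdot(-1) $-symmetric times $(-1)^7$... here one has to be slightly careful: forming the product with a single $\tau_3$ flips the sign, so to stay within the antisymmetric class one should either use $\tau_3 \# \tau_3$ (which is $+$symmetric of length $64$, giving the progression $c + 64\,\N$ and then fill in the intermediate residues $c, c+8, c+16, \dots, c+56$ with eight separately exhibited symmetric seeds), or observe that $\tau_6$ is $+$symmetric of length $64$ and play the same game. In other words, the progression with common difference $8$ is covered by finitely many symmetric seeds together with one $+$symmetric multiplier of length $64$.

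For the sporadic entries --- $48 \in L_6$; $96,112,128,144,160,176 \in L_7$; $144 \in L_8$; $192, 240 \in L_9$; and the base points $192$ of the $m=7,8$ progressions and the small entries of the $m\le 5$ rows --- the argument is simply: exhibit an explicit $(-1)^m$ symmetric witness in each case. These can be produced by running the same ILP solver but adding the linear constraint $a_i = (-1)^m a_{n-1-i}$, which cuts the number of variables roughly in half and the search becomes easy for these small $n$; the resulting polynomials (or the corresponding sets $A$) would be tabulated. The small rows $m \le 5$ of Table~1 are handled uniformly: $L_1 = 2+2\N$ has the antisymmetric seed $1-x$ and multiplier $\tau_1 = 1-x$ again (antisymmetric, length $2$, product with it flips sign, so use $(1-x)\#(1-x) = 1 - x - x^2 + x^3$ of length $4$ plus the two seeds of lengths $2,4$), and similarly the even rows use $\tau_2,\tau_4$ as $+$symmetric building blocks while filling residue classes with a handful of explicit symmetric seeds; alternatively one invokes the easy induction sketched in the text for $m=2$ and its analogues.

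The main obstacle is not any single deep step but the bookkeeping: making sure that in each row the chosen multiplier has the right \emph{symmetry sign} so that the expanded product lands in the $(-1)^m$ class rather than its opposite, and simultaneously has \emph{length equal to the common difference} of that row so the arithmetic progression is generated with no gaps. The sign issue is exactly why one prefers even powers of Thue--Morse multipliers (which are always $+$symmetric); the length issue forces one to supply, for each row with common difference $d$, as many explicit symmetric seeds as there are residues of the base length modulo $d$ that must be reached before the multiplier ``kicks in'' (i.e. the finitely many elements of $L_m$ below the start of the eventual progression, all of which are in Table~1 anyway). So the proof reduces to: (i) the symmetry/order lemma for $\#$; (ii) a finite table of explicitly-verified $(-1)^m$ symmetric LPs, one for each entry of Table~1 that is not obtained from a smaller one by multiplication by a fixed $+$symmetric $\tau_{2j}$; and (iii) the observation that $\#$-ing any of these with the appropriate $\tau_{2j}$ preserves membership in $\calP(\cdot,m)$ and the symmetry type, thereby covering every entry of Table~1.
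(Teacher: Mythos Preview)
Your proposal has a fatal arithmetic error: the expanded product $\#$ \emph{multiplies} lengths, it does not add them. By definition $(f\#g)(x)=f(x)\,g(x^{\ell(f)})$, so $\ell(f\#g)=\ell(f)\,\ell(g)$; this is exactly why Multiplication in Theorem~\ref{basic} says $f\#g\in\calP(n\,n',m+m')$. Thus $h_c\#\tau_4$ has length $16c$, not $c+16$, and iterating gives lengths $c\cdot 16^j$, a geometric progression. Your construction therefore cannot produce the arithmetic progressions $192+8\,\N$ or $192+16\,\N$ that Table~1 requires. (There is also a secondary slip: for $m=8$ the difference is $16=2^4$ with $4<m$, so the claimed inequality ``$k\ge m$'' is false, and the inclusion $\calP(d,k)\subseteq\calP(d,m)$ goes the wrong way; but even with this repaired the length problem remains.)

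The obvious fix---replace $\#$ by the join $\lor$, which \emph{does} add lengths---does not work either, because, as the paper notes explicitly, joining does not preserve $(-1)^m$\,symmetry: $f\lor g$ is almost never symmetric or antisymmetric even when $f$ and $g$ are. This is precisely the obstacle the paper confronts, and it is why the authors introduce \emph{regenerative pairs} (Theorem~\ref{RPthm}): a genuinely new construction in which one starts from two nested half-polynomials $f$ and $f_1=f\lor g$ whose $(-1)^m$\,symmetrizations both lie in the right $\calP$, and proves that the symmetrization of a specific further extension $f_2=f\lor g\lor s\,g^*$ again has order $m$. Iterating this yields an arithmetic progression of lengths with $(-1)^m$\,symmetric witnesses. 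The RPs in Table~4 (one for $m\le 5$, two for $m=6$, nine for $m=7$, with $m=8$ obtained by doubling) together with the sporadic symmetric witnesses in Table~5 are what actually establish Theorem~\ref{3.2}. Your proposal is missing this mechanism entirely, and without it the infinite families cannot be covered.
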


Recall that $m^*(n) = \max\{m : \calP(n,m) \ne \varnothing \}$, i.e., $m = m^*(n)$ is the largest
power of $x-1$ that divides some LP of length~$n$.
Earlier work and Theorem~\ref{3.1} determine $m = m^*(n)$ for $m \le 7$.
The $m^*(256)$ value in the $m^*$ table was known \cite{littlewood}, and our calculations 
found the following values.

\begin{thm}\label{3.3} 
$m^*(192)=9$ and $m^*(208)=m^*(224)=8$.
\end{thm}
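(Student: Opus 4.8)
The plan is to split Theorem~\ref{3.3} into a routine ``existence'' part (exhibiting LPs of the claimed orders) and a ``nonexistence'' part (ruling out higher orders), and to note that every piece but one follows at once from Theorem~\ref{3.1} together with the trivial fact that $m^*(n)$ is well defined and at most $n-1$: an LP of length $n$ is a nonzero polynomial of degree $n-1$, so it cannot be divisible by $(x-1)^{n}$. First I would record the monotonicity $\calP(n,m')\subseteq\calP(n,m)$ for $m'\ge m$, hence $L_{9}\supseteq L_{10}\supseteq\cdots$, which turns every upper bound on an $m^*$ into a single emptiness statement. For the lower bounds: $192\in L_9$ is part of Theorem~\ref{3.1}, while $208=192+16$ and $224=192+32$ lie in $\{144\}\cup(192+16\N)=L_8$, so $m^*(192)\ge 9$ and $m^*(208),m^*(224)\ge 8$. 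For the upper bounds on $m^*(208)$ and $m^*(224)$: since $208$ and $224$ are both less than $272$ and are neither $192$ nor $240$, Theorem~\ref{3.1} gives $208,224\notin L_9$, hence by monotonicity $208,224\notin L_m$ for every $m\ge 9$, so $m^*(208)=m^*(224)=8$.

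That leaves exactly one genuinely new assertion, the upper bound $m^*(192)\le 9$, which by monotonicity is equivalent to $\calP(192,10)=\varnothing$. I would prove this by a targeted integer-linear-programming infeasibility computation of precisely the kind used for the negative results of \S5. An element of $\calP(192,10)$ is the same as a bipartition $[192]=A\sqcup B$ with $A\overset{10}{=}B$; writing $y_i\in\{0,1\}$ for the indicator of $i\in A$, the question is whether the linear system $\sum_{i\in[192]}y_i\,i^{\,j}=\tfrac12\sum_{i=0}^{191}i^{\,j}$ for $j=0,1,\dots,9$ admits a $0/1$ solution. Were some right-hand side not an integer we would be finished immediately, but the subsystem with $j\le 8$ is feasible because $192\in L_9$, so the obstruction genuinely sits in the $j=9$ equation; the goal is to have a solver certify that adjoining that equation to a feasible $j\le 8$ system leaves no $0/1$ solution.

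The hard part is making that certification actually terminate with a verifiable proof: a blind search over $2^{192}$ sign patterns, or even a raw ILP with ten dense, high-degree equality constraints on $192$ binary variables, is out of reach, so the computation must be compressed using structure developed earlier in the paper. The plan there is (i) to break the inherent symmetries --- complementation $A\leftrightarrow B$ and the reversal $i\mapsto 191-i$ --- with lexicographic side constraints; (ii) to use the divisibility and $2$-adic valuation machinery behind Theorem~\ref{basic}, together with the low-order moment equations, to force or sharply restrict the leading and trailing coefficients (and any forced local patterns) of a hypothetical $f\in\calP(192,10)$; and (iii) to exploit the join and expanded-product decompositions of \S2, with the block structure suggested by the regenerative-pair phenomenon of \S6 guiding which candidate forms to test first, so as to reduce the problem to a short list of ILP subproblems each small enough to be dispatched as infeasible. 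Arranging (ii)--(iii) so that the residual search is genuinely feasible rather than merely plausible is where the real effort goes, and it is the same bottleneck that governs all the negative results of \S5. Once $\calP(192,10)=\varnothing$ is in hand, the three easy pieces above complete the proof that $m^*(192)=9$ and $m^*(208)=m^*(224)=8$.
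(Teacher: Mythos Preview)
Your reduction is fine: by Theorem~\ref{3.1} you correctly harvest $192\in L_9$, $208,224\in L_8$, and $208,224\notin L_9$, and you correctly isolate $\calP(192,10)=\varnothing$ as the one remaining fact. The gap is in how you propose to prove that last fact.

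You treat $192\notin L_{10}$ as a heavy ILP infeasibility computation, invoking symmetry-breaking, $2$-adic restrictions, and join/expanded-product/regenerative-pair heuristics to make the search tractable. None of that is needed, and some of it is misdirected (regenerative pairs and the $\#$/$\lor$ decompositions are \emph{construction} tools; they give you no traction on nonexistence). In the paper the case $192\notin L_{10}$ is dispatched in a few lines by the modular constraints of \S4 at the single prime $p=5$: with $m=10=2(p-1)+2$ one has $s=r=2$, so by Proposition~4.3(2) every $d_{5,j}$ is divisible by $25$, while Lemma~4.2 forces $|d_{5,j}|\le 39$, the parities $d_{5,j}\equiv C_{5,j}([192])\pmod 2$, and $\sum_j d_{5,j}=0$. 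These pin down $\TI{d}_5=(\pm25,\mp25,0,0,0)$. Then the $j=1$ congruence in Proposition~4.3(2) reads
\[
d_{5,1}+2d_{5,2}+3d_{5,3}+4d_{5,4}=\mp 25\equiv 0\pmod{125},
\]
which is false. That is the whole argument; no ILP enters. Your proposed route would eventually succeed, but it obscures that this case --- unlike, say, $168\notin L_7$ --- has a short hand proof once you look at the right prime.
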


Note that $m = 240$ is the smallest value for which $m^*(n)$ is not known exactly.
The remarkable $2^{51}$ result is due to Richard Stong (Theorem~\ref{stong}), as will be
described in detail later.

\begin{table}[ht]\label{lmTab2}
$
\begin{array}{|c|c|c|c|c|c|c|c|c|c|c|c|c|c|c|c|c|c|c|} \hline
 n & 8 & 16 & 24 & 32 & 40 & 48 & 56 & 64 & 72 & 80 & 88 & 96 & 104 & 112 & 120 & 128 & 136 & 144 \\ \hline
m^*(n)  & 3 & 4 & 4 & 5 & 5 & 6 & 5 & 6 & 6 & 6 & 6 & 7 & 6 & 7 & 6 & 7 & 6 & 8\\ \hline
\end{array}
$
\hfill \\ \vspace*{0.1in}
$
\begin{array}{|c|c|c|c|c|c|c|c|c|c|c|c|c|c|c|c|} \hline 
 n & 152 & 160 & 168 & 176 & 184 & 192 & 200 & 208 & 216 & 224 & 232 & 240 & 248 & 256 & \vphantom{\rule{0pt}{14pt}}2^{51} \\ \hline
m^*(n)  & 6 & 7 & 6 & 7 & 6 & 9 & 7 & 8 & 7 & 8 & 7 & \text{$\geq $9} & 7 & 8 & \vphantom{\rule{0pt}{12pt}} \text{$\geq $52} \\ \hline
\end{array}
$ \\ \vspace*{0.05in}
\caption{$m^*(n)$.}
\end{table}

\section{Basic Theorems}\label{thms}

We start with several classic results about equipowerful sets. 
The first gives the important equivalence between LPs, equipowerful sets, and spectral-null codes.

\begin{equivlemma}
For a bipartition $A,B$ of $[n]$ and an integer $m\geq 1$, the following are equivalent.
\begin{enumerate}
\item $A\overset{m}{=}B$.
\item $t^m$ divides the power series of $g(t)=\sum_{a\in A}\Me^{\Mi at}-\sum_{b\in B}\Me^{\Mi bt}$.
\item $(x-1)^m$ divides the polynomial $f(x)=\sum_{a\in A}x^a-\sum_{b\in B}x^b$.
\end{enumerate}
\end{equivlemma}

\begin{proof} { }$1\Leftrightarrow 2$. { }Let $a_i$ be the $\pm $1 sequence of length 
$n$ that corresponds to $A,B$. 
Then (2) holds if and only if the first $m$ derivatives $g^{(j)}(0)$, $0\leq j\leq m-1$, all equal 0. 
But $g^{(j)}(0)=\sum_{i=0}^{n-1} a_ii^{j}$, 
so the vanishing of the derivatives is equivalent to $A\overset{m}{=}B$.

$2\Leftrightarrow 3$. { }An easy induction argument, using the chain and product rules, 
shows that there are integers $s_{j,k}$ so that
\[
 g^{(k)}(t)=f^{(k)}(x)(i x)^k+i^k\sum_{j=1}^{k-1}s_{j,k}\text{ }f^{(j)}(x) \, x^j,
\]
where $x=\Me^{\Mi t}$.  If all of the $j$th derivatives $g^{(j)}(0)$ and $f^{(j)}(1)$ vanish for $j\leq k-1$, 
then it follows that $g^{(k)}(0)=0$ and $f^{(k)}(1)=0$ are equivalent, and therefore $g$ has order $m$ 
at 0 if and only if $f$ has order $m$ at 1. 
\end{proof}

The next theorem collects various known results. The Multiplication result is in \cite[Prop.~2.4]{littlewood}.

\begin{thm}\label{basic} \phantom{fakelinesothatitcanbe broken...} \hfill \vspace*{0.03in} \\ \vspace*{0.05in}
\hspace*{0.1 in} {\bf Thue--Morse.}  $\tau_m \in \calP(m,2^m)$, and therefore $2^m\in L_m$. \hfill \\ \vspace*{0.05in}
\hspace*{0.1in} {\bf Addition.}  If $f\in \calP(n_1,m)$ and $g\in \calP(n_2,m)$ 
then $f \lor g \in \calP(n_1+n_2,m)$ \hfill \\ \vspace*{0.05in}
\hspace*{0.1in} {\bf Doubling.}  If $f \in \calP(n,m)$ then $f \lor -f \in \calP(2n,m+1)$. \hfill \\ \vspace*{0.05in}
\hspace*{0.1in} {\bf Multiplication.} If $f\in \calP (n,m)$ and 
$g\in \calP (n',m')$, then $f\#g\in \calP (n\, n',m+m')$. \hfill \\ \vspace*{0.05in}
\hspace*{0.1in} {\bf Symmetry.} If $f$ is a $(-1)^m$ symmetric LP that is divisible by
$(x-1)^{m-1}$ then $f$ is divisible by $(x-1)^m$.  \hfill \\ \vspace*{0.05in}
\hspace*{0.1in} {\bf Divisibility.} Let $n=2^k\, u$ where $u$ is odd.  If $\calP(n,m)$ is nonempty then $2^k > m$. \\
\end{thm}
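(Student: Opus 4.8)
The plan is to establish the six assertions one at a time; each reduces to a divisibility fact about the factor $x-1$, worked in $\mathbb{Z}[x]$ for the first five and in $\mathbb{F}_2[x]$ for the last, and most are just bookkeeping with joins and expanded products. For \textbf{Thue--Morse} I would induct on $m$ using $\tau_m=\tau_{m-1}\lor(-\tau_{m-1})$: since $\deg\tau_{m-1}=2^{m-1}-1<2^{m-1}=\ell(\tau_{m-1})$ the two blocks of coefficients do not overlap, so $\tau_m$ is genuinely an LP of length $2^m$, while each factor $1-x^{2^j}$ is divisible by $1-x$, giving $(1-x)^m\mid\tau_m$; hence $\tau_m\in\calP(2^m,m)$ and $2^m\in L_m$. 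For \textbf{Addition} the coefficients of $f\lor g$ are those of $f$ followed by those of $g$, so it is an LP of length $n_1+n_2$, and $(x-1)^m$ divides both $f(x)$ and $x^{\ell(f)}g(x)$, hence their sum. \textbf{Doubling} is the identity $f\lor(-f)=f(x)\bigl(1-x^{\ell(f)}\bigr)$ together with $x-1\mid 1-x^{\ell(f)}$, which raises the order by exactly one while doubling the length.

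For \textbf{Multiplication}, writing $f(x)=\sum_{i=0}^{n-1}a_ix^i$ and $g(y)=\sum_{j=0}^{n'-1}b_jy^j$, in $(f\#g)(x)=f(x)g(x^n)$ the monomial $x^{i+nj}$ appears with coefficient $a_ib_j=\pm1$, and $(i,j)\mapsto i+nj$ is a bijection from $[n]\times[n']$ onto $[nn']$ by uniqueness of base-$n$ digits, so $f\#g$ is an LP of length $nn'$. If $g(y)=(y-1)^{m'}h(y)$, then $g(x^n)=(x^n-1)^{m'}h(x^n)$ is divisible by $(x-1)^{m'}$ because $x-1\mid x^n-1$, and multiplying by $f(x)$, which carries $(x-1)^m$, yields order $m+m'$.

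For \textbf{Symmetry} I would write $f=(x-1)^{m-1}g$ with $g\in\mathbb{Z}[x]$; since the LP $f$ has constant term $\pm1$ one gets $g(0)\neq0$ and $\deg g=\deg f-(m-1)$, so reversal behaves as expected. Using $(1/x-1)^{m-1}=(-1)^{m-1}(x-1)^{m-1}x^{-(m-1)}$, a short computation gives $f^*=(-1)^{m-1}(x-1)^{m-1}g^*$; comparing with the hypothesis $f^*=(-1)^m f=(-1)^m(x-1)^{m-1}g$ and cancelling $(x-1)^{m-1}$ forces $g^*=-g$. Since $g^*(1)=g(1)$ by definition of reversal, this gives $g(1)=0$, so $x-1\mid g$ and $(x-1)^m\mid f$.

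Finally, \textbf{Divisibility} is the one assertion needing an idea from outside this circle of manipulations: reduce mod $2$. Every coefficient of $f\in\calP(n,m)$ is odd, so $f(x)\equiv 1+x+\cdots+x^{n-1}$ in $\mathbb{F}_2[x]$, that is, $(x-1)f(x)\equiv x^n-1$ there; and since $(x-1)^m\mid f$ in $\mathbb{Z}[x]$ this persists mod $2$, whence $(x-1)^{m+1}\mid x^n-1$ over $\mathbb{F}_2$. Writing $n=2^k u$ with $u$ odd, the Frobenius map gives $x^n-1=(x^u-1)^{2^k}$ over $\mathbb{F}_2$, and $x^u-1$ is separable there (its derivative $x^{u-1}$ is coprime to it), so $x=1$ is a root of $x^n-1$ of multiplicity exactly $2^k$; therefore $m+1\le 2^k$, i.e.\ $2^k>m$. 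I expect this last step to be the only real obstacle: it is the single place where the LP hypothesis is used essentially, via the congruence $f\equiv(x^n-1)/(x-1)\pmod 2$ and the Frobenius factorization of $x^n-1$ in characteristic $2$. The Symmetry computation is the only other non-mechanical point; the rest follows directly from unwinding the definitions of $\lor$ and $\#$.
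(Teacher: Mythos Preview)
Your proof is correct and follows essentially the same route as the paper's: each of the first five parts is handled by the same divisibility bookkeeping with $\lor$, $\#$, and reversal, and your Divisibility argument---reducing mod $2$, using Frobenius to write $x^n-1=(x^u-1)^{2^k}$, and invoking separability of $x^u-1$---is just a repackaging of the paper's explicit computation that $1+y+\cdots+y^{u-1}$ has value $u\equiv 1$ at $y=1$. The only cosmetic difference is that in Multiplication you spell out the base-$n$ bijection where the paper speaks of ``$n'$ blocks of length $n$,'' and in Symmetry you track degrees explicitly where the paper invokes multiplicativity of reversal; neither changes the argument.
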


\begin{proof} \textit{Thue--Morse.} 
Since $\tau_{m+1}=\tau_m\lor -\tau_m$ it follows that $\tau_m$ is an LP, and
length $2^m$ and order $m$, by induction.  Note also that $\tau_{m+1} =\tau_m\# (1-x)=(1-x) \#\tau_m$.

\noindent \textit{Addition.} The join $f \lor g$ is obviously an LP, has length $n_1+n_2$ and is divisible 
by $(x-1)^m$, as asserted.  Note that this implies that $L_m$ is closed under addition.

\noindent \textit{Doubling.} If $f$ is in $\calP (n,m)$ then 
$g=(1-x^n)f(x) = f\lor (-f)$ has length $2n$ and is divisible by $(1-x)^{m+1}$ since
$1-x^n$ introduces an extra $1-x$ factor.  In other words, if $n \in L_m$ then $2n \in L_{m+1}$.

\noindent \textit{Multiplication.} The polynomial $h(x)=f(x)g(x^n)$ 
consists of $n'$ blocks of length $n$, each the result of multiplying the
coefficients of $f$ by a single coefficient of~$g$.
In particular $h$ is an LP. 
The polynomial $h$ is divisible by $(x-1)^m(x^n-1)^{m'}$ and therefore divisible by $(x-1)^{m+m'}$.
Note that Doubling is the special case $g(x) = 1-x$ of Multiplication.

\noindent \textit{Symmetry.} 
We are given that $f^*(x) = (-1)^m f(x)$, and that there is a polynomial $g$ such that $f(x) = (x-1)^{m-1} g(x)$.
Therefore, using the fact that reversal is multiplicative, we get
\[
(x-1)^{m-1}g(x) =f(x) = (-1)^m f^*(x) = (-1)^m(-1)^{m-1}(1-x)^{m-1}g^*(x) = -(1-x)^{m-1}g^*(x).
\]
Dividing by $(1-x)^{m-1}$ gives $g(x) = - g^*(x)$, which implies that $g(1) = g^*(1) = 0$ so that
$g(x)$ is divisible by $x-1$, from which the result follows.
Note that this implies that if $f$ has exact order $m$, and is symmetric or antisymmetric, then $f$
is $(-1)^m$ symmetric.

\noindent \textit{Divisibility.} The key point is that the number of factors of $1-x$ in $f(x)$ modulo 2 
(i.e., working in the ring of polynomials over the 2-element field $\{0,1\}$) is at least as large as the number 
of such factors when $f$ is thought of as a polynomial with integer coefficients.

Suppose that $f\in \calP (n,m)$. Then $f(x)\equiv 1+x+x^2 +\dots +x^{n-1} \pmod 2 $
and $(1-x)f(x)\equiv 1-x^n$, where $\equiv $ will denote coefficient-wise congruence modulo~2. 
Note that $1-x^2\equiv (1-x)^2$. Iterating this shows that $1-x^t\equiv (1-x)^t$, where $t=2^k$. 
Introducing the shorthand $y=x^t$ gives
\[
(1-x)f(x)\equiv 1-x^n=1-x^{tu}=(1-y)\left(1+y+\dots +y^{u-1}\right)\equiv (1-x)^tg(x),
\]
where $g(1)=u \equiv 1$ so that $g(x)$ is not divisible by $1-x$. 
Counting factors of $1-x$ gives $1+m\leq 2^k$, as claimed.
\end{proof}

\vspace*{0.1in}

\noindent {\bf Remarks}: 
\begin{enumerate}
\item 
Doubling reverses symmetry in the sense that it turns a symmetric
polynomial into an antisymmetric one, and vice versa.
Addition does not preserve either symmetry.

\item Thue--Morse, Addition, and Doubling imply that 
\[
m^*(2^m)\geq m, \quad 
m^*(n_1+n_2)\geq \min \left[m^*(n_1),m^*(n_2)\right], \; \textrm{and} \;
m^*(2n)\geq m^*(n)+1.
\]

\item The sets arising from the $\tau_k$ are initial segments of the infinite 
Thue--Morse sequence $0,3,5,6,9,10,12,15,17,18,20,23,24,27,29,30,\dots$.
This sequence is exactly the set of so-called \textit{evil numbers}: integers with 
an even number of 1s in their binary expansion. 
Note also that this sequence has one entry in common with each pair $\{0,1\},\{2,3\},\{4,5\},\dots$. 
Our heuristic searches often tried to prioritize this feature (or similar Thue--Morse-like properties)
to aim for earlier success.

\item Divisibility says that if $\calP (n,m)\neq \varnothing$, then $n$ has 
to be divisible by the smallest power of 2 that is strictly bigger than $m$. 
As mentioned earlier, the main result of \cite{asymptotically} implies that
this necessary condition is also sufficient for large enough~$n$.
\end{enumerate}

Let's look at at $L_m$ for small $m$. The set $L_1$ consists of all $n$ such that there 
is some length-$n$ LP $f$ that is divisible by $x-1$; this is the same as saying that $f(1)=0$. 
Such an LP exists if and only if $n$ is even, so $L_1$ is the set of positive even integers,
which we write as $2+2\N$, where $\N$ denotes the set of all nonnegative integers.

By Divisibility, $L_2\subseteq 4+4\N$, and  $\tau_2=1-x-x^2+x^3\in \calP (4,2)$ so $4\in L_2$.
Repeatedly joining $\tau_2$, and using Addition, shows that $L_2 = 4+4\N$.

Although $L_3\subseteq 4+4\N$ by Divisibility, an easy search shows 
that no LP of length 4 is divisible by $(x-1)^3$. 
Because $\tau_3\in \calP (4,3)$ we know that $8+8\N\subseteq L_3\subseteq 8+4\N$. 
A computer (or even hand) search finds that
\[
 f=1-x+x^2-x^3-x^4-x^5+x^6+x^7+x^8-x^9+x^{10}-x^{11}\equiv 0 \bmod (x-1)^3.
\]
By the Equivalence Lemma this is equivalent to the fact that $\{0,2,6,7,8,10\}$ is a 
3-equipowerful subset of $[12]$. By repeatedly joining $\tau_3$ onto $\tau_3$ or $f$ 
(i.e., using Addition) it follows that $L_3=8+4\N$. 
This characterization is due to Boyd \cite{boyd1}.

Similar but more elaborate calculations (done in \cite{littlewood}) show that $L_4=16+8\N$ 
and $L_5=32+8\N$. 
The natural guess that $L_6=64+8\N$ turns out to be wrong, as discovered by 
Boyd \cite{boyd1} because there is a (unique up to sign) polynomial in $\calP (48,6)$ 
(i.e., 48 is 6-equipowerful); he also found \cite{boyd2} that 30, 40, and 56 are not 
in the set, so that $L_6=\{48\}\cup (64+8\N)$. 
Theorem 3.1 specifies $L_m$ exactly for $m=7$ and $m= 8$.

A key to understanding the structure of the LPs in $\calP (n,m)$ is generating 
various constraints that the polynomials and the corresponding equipowerful sets must satisfy. 
The Divisibility condition in Theorem 4.1 is an example of such a constraint. 
But there are many more and any computer search for equipowerful sets beyond modest values 
of $n$ requires the use of a wide variety of constraints.

Here we use the following notations. For a set $X$ and prime $p$, $C_{p,j}(X)$ denotes 
the number of elements of $X$ that are congruent to $j \bmod p$, namely 
$C_{p,j}(X)=\left| X\cap (p \mathbb{Z}+j)\right|$. 
For an $m$-equipowerful bipartition of $[n]$ into $A,B$, put $d_{p,j}=C_{p,j}(A)-C_{p,j}(B)$. 
And we use $\TI{d}_p$ for the vector of $d_{p,j}$ values, $j=0,1,\dots ,p-1$.

The preceding definition formalizes the idea of the discrepancy between the sets in a witnessing bipartition. 
If $\TI{d}_2=(a,b)$, then the even numbers in $A$ exceed those 
in $B$ by $a$, while the odds in $A$ exceed the odds in $B$ by $b$. 
So if $\TI{d}_2=(0,0)$, then the even and odd counts are the 
same for $A$ and $B$; in this case each set would have $n/4$ evens and $n/4$ odds. 
This uniformity of parity happens for the sets arising from the Thue--Morse polynomials.

The next lemma gives several basic constraints regarding the value of $d_{p,j}$.

\begin{lemma}\label{4.2}
Suppose $d_{p,j}$ are defined from an $m$-equipowerful bipartition $A\cup B$ of $[n]$. Then:
\begin{enumerate}
\item { }$\left| d_{p,j}\right| \leq C_{p,j}([n])\leq \left\lceil \frac{n}{p}\right\rceil$.
\item { }$\sum_{j=0}^{p-1}d_{p,j}=0$.
\item { }For each $j$, $d_{p,j}-C_{p,j}([n])$ is even.
\end{enumerate}
\end{lemma}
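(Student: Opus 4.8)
The plan is to prove the three statements in order, since the later ones will lean on the setup of the earlier. Throughout, write $A_j = A \cap (p\Z + j)$ and $B_j = B \cap (p\Z + j)$, so that $C_{p,j}(A) = |A_j|$, $C_{p,j}(B) = |B_j|$, and $C_{p,j}(A) + C_{p,j}(B) = C_{p,j}([n])$ because $A \cup B$ is a partition of $[n]$. The quantity of central interest is $d_{p,j} = C_{p,j}(A) - C_{p,j}(B)$.

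For part (1): the identity $d_{p,j} = C_{p,j}(A) - C_{p,j}(B)$ together with $C_{p,j}(A), C_{p,j}(B) \ge 0$ and $C_{p,j}(A) + C_{p,j}(B) = C_{p,j}([n])$ immediately gives $|d_{p,j}| \le C_{p,j}([n])$. For the second inequality, note that the residue class $j \bmod p$ meets $[n] = \{0,1,\dots,n-1\}$ in either $\lceil n/p \rceil$ or $\lfloor n/p \rfloor$ elements depending on $j$, so in all cases $C_{p,j}([n]) \le \lceil n/p \rceil$. This part requires no use of the equipowerful hypothesis at all.

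For part (2): summing $d_{p,j}$ over $j = 0, 1, \dots, p-1$ gives $\sum_j d_{p,j} = \sum_j C_{p,j}(A) - \sum_j C_{p,j}(B) = |A| - |B|$, and $|A| = |B|$ since the bipartition has order $m \ge 1$ (this is exactly the $j = 0$ moment condition $A \overset{m}{=} B$, equivalently $f(1) = 0$). So part (2) is where the equipowerful hypothesis enters, but only through its weakest consequence.

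Part (3) is the one genuine computation and the step I expect to be the main obstacle, since it is where the full strength of $A \overset{m}{=} B$ (or at least its $j=1$ instance, plus a parity bookkeeping) must be combined with counting residues. The key observation is that $d_{p,j} - C_{p,j}([n]) = \bigl(C_{p,j}(A) - C_{p,j}(B)\bigr) - \bigl(C_{p,j}(A) + C_{p,j}(B)\bigr) = -2\,C_{p,j}(B)$, which is manifestly even. So in fact part (3) reduces to the same trivial algebraic identity as part (1) and needs no deep input — the only subtlety is noticing this cancellation rather than trying to attack $d_{p,j}$ and $C_{p,j}([n])$ separately. I would present all three parts as short consequences of the two identities $d_{p,j} = C_{p,j}(A) - C_{p,j}(B)$ and $C_{p,j}(A) + C_{p,j}(B) = C_{p,j}([n])$, with the equipowerful hypothesis invoked only once, for $|A| = |B|$ in part (2). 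The whole proof is a few lines; the ``obstacle'' is purely one of spotting the right rearrangement in (3).
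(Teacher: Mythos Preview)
Your proof is correct and follows essentially the same approach as the paper: part~(1) comes from $C_{p,j}(A)+C_{p,j}(B)=C_{p,j}([n])$ with both terms nonnegative, part~(2) reduces to $|A|=|B|$, and part~(3) is exactly the observation $d_{p,j}-C_{p,j}([n])=-2\,C_{p,j}(B)$ (the paper phrases this as $C_{p,j}(A)-C_{p,j}(B)\equiv C_{p,j}(A)+C_{p,j}(B)\pmod 2$, which is the same identity). Your initial worry that part~(3) would need more of the equipowerful hypothesis was unfounded, as you yourself realized; only $|A|=|B|$ is ever used.
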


\begin{proof} (1) holds because the extreme case has all the $j \bmod p$ numbers in one of the sets.
(2) reduces to $\left| A\right| =\left| B\right|$. 
For (3), $d_{p,j}=C_{p,j}(A)-C_{p,j}(B)\equiv C_{p,j}(A)+C_{p,j}(B)=C_{p,j}([n]) \pmod 2 $. 
\end{proof}

The following propositions, with the same assumptions about $m,n,A,B$, provide constraints on~$d_{p,j}$ 
that depend on $m$ and the prime $p$. 
The results are immediate consequences of items 3.3 and 3.4 in \cite{littlewood}.
The proof method in \cite{littlewood} builds upon ideas introduced by Boyd that involve substituting
roots of unity for $x$ and then using facts about the cyclotomic fields generated by those roots
of unity.

\begin{prop}  Suppose $m=s(p-1)+r$, with $0\leq r\leq p-2$.  Then:
\begin{enumerate}
\item { }If $r=0$, then $d_{p,0},d_{p,1},\dots ,d_{p,p-1}$ are all congruent modulo $p^s$. \vspace*{0.1in}

\item { }If $r\geq 1$, then each $d_{p,j}$ is divisible by $p^s$ and, for $0\leq j\leq r-1$, 
\[
\binom{j}{j} d_{p,j}+
\binom{j+1}{j} d_{p,j+1}+
\dots 
\binom{p-1}{j} d_{p,p-1}\equiv 0 \pmod {p^{s+1}}.
\]
\end{enumerate}
\end{prop}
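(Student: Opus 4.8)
The plan is to substitute a primitive $p$-th root of unity $\zeta=\zeta_p$ for $x$ in the defining polynomial $f(x)=\sum_{a\in A}x^a-\sum_{b\in B}x^b$ and to read off divisibility information inside the ring $\Z[\zeta]$ of integers of $\Q(\zeta)$. Set $\pi=\zeta-1$, and recall the classical facts that $\pi$ generates the unique prime of $\Z[\zeta]$ above $p$, that $p$ is totally ramified so that the normalized valuation $v_\pi$ (with $v_\pi(\pi)=1$) satisfies $v_\pi(p)=p-1$ and hence $v_\pi(c)=(p-1)v_p(c)$ for $c\in\Z$, and that $\{1,\zeta,\dots,\zeta^{p-2}\}$ is a $\Z$-basis of $\Z[\zeta]$. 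Since $(x-1)^m$ is monic and divides $f$ in $\Q[x]$, the quotient $g=f/(x-1)^m$ lies in $\Z[x]$, so $g(\zeta)\in\Z[\zeta]$; grouping the exponents of $f$ by residue $\bmod\,p$ then gives
\[
\sum_{j=0}^{p-1}d_{p,j}\,\zeta^{j}\;=\;f(\zeta)\;=\;(\zeta-1)^m g(\zeta)\;=\;\pi^m g(\zeta)\;\in\;\pi^m\,\Z[\zeta].
\]
The rest of the argument consists of extracting the two claims from this single containment.

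For part (1), where $m=s(p-1)$, the right-hand ideal is $p^s\Z[\zeta]$. Using $1+\zeta+\dots+\zeta^{p-1}=0$ I would subtract $d_{p,p-1}\bigl(1+\zeta+\dots+\zeta^{p-1}\bigr)$ to rewrite the left side as $\sum_{j=0}^{p-2}(d_{p,j}-d_{p,p-1})\zeta^{j}$. Because $\{1,\zeta,\dots,\zeta^{p-2}\}$ is a $\Z$-basis, an element with these coordinates lies in $p^s\Z[\zeta]$ exactly when each coordinate is divisible by $p^s$; hence $d_{p,j}\equiv d_{p,p-1}\pmod{p^s}$ for every $j$, which is the assertion.

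For part (2), where $m=s(p-1)+r$ with $1\le r\le p-2$, I would instead expand the left side in powers of $\pi$ via $\zeta^{j}=(1+\pi)^j=\sum_i\binom{j}{i}\pi^i$:
\[
\sum_{j=0}^{p-1}d_{p,j}\,\zeta^{j}=\sum_{i=0}^{p-1}e_i\,\pi^i,\qquad e_i:=\sum_{l=i}^{p-1}\binom{l}{i}d_{p,l},
\]
where $e_0=\sum_l d_{p,l}=0$ by Lemma~\ref{4.2}(2). The key observation is that the nonzero terms among $e_1\pi,\dots,e_{p-1}\pi^{p-1}$ have pairwise distinct $v_\pi$-values, since $v_\pi(e_i\pi^i)=i+(p-1)v_p(e_i)\equiv i\pmod{p-1}$ and $i\mapsto i\bmod(p-1)$ is injective on $\{1,\dots,p-1\}$. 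So no cancellation can occur, $v_\pi\bigl(\sum_i e_i\pi^i\bigr)=\min_i\bigl(i+(p-1)v_p(e_i)\bigr)$, and membership in $\pi^m\Z[\zeta]$ forces $i+(p-1)v_p(e_i)\ge s(p-1)+r$ for every $i$ with $e_i\ne0$, i.e.\ $v_p(e_i)\ge s+(r-i)/(p-1)$. For $1\le i\le r-1$ the fraction lies in $(0,1)$, so $p^{s+1}\mid e_i$; for $r\le i\le p-1$ it lies in $(-1,0]$, so $p^s\mid e_i$. Since also $e_0=0$, all $e_l$ are divisible by $p^s$, and as the map $(d_{p,l})\mapsto(e_l)$ is inverted over $\Z$ by $d_{p,j}=\sum_{l=j}^{p-1}(-1)^{l-j}\binom{l}{j}e_l$, this yields $p^s\mid d_{p,j}$ for all $j$. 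Finally, the sum $\binom{j}{j}d_{p,j}+\binom{j+1}{j}d_{p,j+1}+\dots+\binom{p-1}{j}d_{p,p-1}$ appearing in the statement is exactly $e_j$, and we have shown $p^{s+1}\mid e_j$ for $1\le j\le r-1$ while $e_0=0$; this is precisely the claimed congruence for $0\le j\le r-1$.

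The one genuinely delicate point is the no-cancellation step: one must be certain the valuations $v_\pi(e_i\pi^i)$ cannot collide, and this is exactly where the hypothesis $m=s(p-1)+r$ with $0\le r\le p-2$ does its work, since it makes $\{1,\dots,p-1\}$ a complete residue system modulo $p-1$. The remaining ingredients are routine — the total ramification of $p$ in $\Q(\zeta_p)$ is classical, and the inverse of the Pascal matrix has integer entries — but it is worth checking the degenerate case $p=2$ separately, where $p-1=1$, $r=0$, $\zeta=-1$, $\Z[\zeta]=\Z$, and the basis argument for part (1) goes through verbatim.
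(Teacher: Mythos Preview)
Your argument is correct. The paper does not supply its own proof of this proposition; it simply cites \cite{littlewood} and remarks that the method there ``involves substituting roots of unity for $x$ and then using facts about the cyclotomic fields generated by those roots of unity.'' That is precisely what you do: evaluate $f$ at a primitive $p$th root of unity, work in $\Z[\zeta_p]$ with the uniformizer $\pi=\zeta_p-1$, and read off the $p$-adic constraints on the $d_{p,j}$ from $f(\zeta_p)\in\pi^m\Z[\zeta_p]$. So your route is the intended one, carried out in full detail rather than by reference.

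One small remark on presentation: the ``no-cancellation'' step also covers the edge case where $\sum_{i\ge 1}e_i\pi^i=0$, since pairwise distinct finite valuations force the sum to be nonzero unless every term vanishes; you might make that explicit. Otherwise the inversion via the signed Pascal matrix and the identification $e_j=\sum_{l\ge j}\binom{l}{j}d_{p,l}$ with the displayed binomial sum are exactly right.
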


The last one is a specific constraint on $d_{2,j}$, asserting that under certain conditions the 
partition must be very unbalanced in terms of parity.

\begin{prop}
If $m=2^q-1$ and $2^q$ divides $n$, then $\left| d_{2,0}-d_{2,1}\right| \geq 2^{2^q-1}$.
\end{prop}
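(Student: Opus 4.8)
The plan is to pass through the polynomial side of the Equivalence Lemma and to realize the quantity $d_{2,0}-d_{2,1}$ as a single value of the witnessing polynomial. Let $f(x)=\sum_{a\in A}x^{a}-\sum_{b\in B}x^{b}\in\calP(n,m)$ be the LP attached to the bipartition. The first step is the observation that
\[
f(-1)=\sum_{a\in A}(-1)^{a}-\sum_{b\in B}(-1)^{b}=\bigl(C_{2,0}(A)-C_{2,0}(B)\bigr)-\bigl(C_{2,1}(A)-C_{2,1}(B)\bigr)=d_{2,0}-d_{2,1},
\]
so that the claim is exactly $|f(-1)|\ge 2^{m}$ with $m=2^{q}-1$. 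I would prove this by combining a divisibility statement, $2^{m}\mid f(-1)$, with a non-vanishing statement, $f(-1)\ne 0$; together these immediately give $|f(-1)|\ge 2^{m}=2^{2^{q}-1}$.

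For the divisibility half: since $f$ has order $m$ and $(x-1)^{m}$ is monic, polynomial division over $\Z$ produces $h\in\Z[x]$ with $f(x)=(x-1)^{m}h(x)$. Evaluating at $x=-1$ gives $f(-1)=(-2)^{m}h(-1)$, a multiple of $2^{m}$. Thus the whole problem reduces to showing $h(-1)\ne 0$, for then $h(-1)\in\Z\setminus\{0\}$ forces $|f(-1)|=2^{m}|h(-1)|\ge 2^{m}$.

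For the non-vanishing half I would reduce modulo $2$, reusing the computation from the proof of Divisibility in Theorem~\ref{basic}. Writing $\bar g$ for the image of $g$ in $\mathbb{F}_{2}[x]$ and taking $n=2^{q}u$ with $u$ odd, we have $\bar f=1+x+\cdots+x^{n-1}$, and, using the Frobenius identity $x^{2^{q}}-1=(x-1)^{2^{q}}$ in $\mathbb{F}_{2}[x]$,
\[
(x-1)\,\bar f=x^{n}-1=(x^{u}-1)^{2^{q}}=(x-1)^{2^{q}}\phi(x),\qquad \phi(x)=\bigl(1+x+\cdots+x^{u-1}\bigr)^{2^{q}},
\]
where $\phi(1)=u^{2^{q}}\equiv 1\ne 0$ since $u$ is odd. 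Cancelling one factor of $x-1$ in the integral domain $\mathbb{F}_{2}[x]$ yields $\bar f=(x-1)^{2^{q}-1}\phi=(x-1)^{m}\phi$. On the other hand, reducing $f=(x-1)^{m}h$ modulo $2$ gives $\bar f=(x-1)^{m}\bar h$, and since $(x-1)^{m}\ne 0$ in $\mathbb{F}_{2}[x]$ we may cancel it to get $\bar h=\phi$. Hence $h(-1)\equiv h(1)\equiv\phi(1)\equiv 1\pmod 2$, so $h(-1)$ is odd, in particular nonzero, which closes the argument.

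The main obstacle is the non-vanishing step: the divisibility $2^{m}\mid f(-1)$ is essentially free, but one genuinely needs the mod-$2$ analysis to rule out $f(-1)=0$, and that analysis only succeeds when the exponent $2^{q}-1$ of $x-1$ dividing $\bar f$ matches $m$ exactly, i.e.\ when $2^{q}$ is the \emph{exact} power of $2$ dividing $n$ (equivalently $u$ odd). Indeed, without this the conclusion can fail: for $m=3=2^{2}-1$ one has $\tau_{3}\in\calP(8,3)$ with $\tau_{3}(-1)=0$, so the hypothesis really must pin down $v_{2}(n)=q$ rather than merely $2^{q}\mid n$. Two minor points remain to be checked carefully but are routine: that $h\in\Z[x]$ and that reduction modulo $2$ commutes with the factorization by $(x-1)^{m}$, both of which follow because $(x-1)^{m}$ is monic.
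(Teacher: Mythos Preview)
Your proof is correct and follows essentially the paper's own route: the paper isolates the two halves as Lemma~4.5 (write $f=(x-1)^m g$, so $|f(-1)|=2^m|g(-1)|\ge 2^m$ once $f(-1)\ne 0$) and Proposition~4.6 (the mod-$2$ count of $(x-1)$-factors in $x^n-1$ forces $f(-1)\ne 0$ when $m=t-1$), then combines them with $f(-1)=d_{2,0}-d_{2,1}$. You have merged the two into a single direct computation of $\bar h=\phi$ and $\bar h(1)=1$, which is a bit cleaner but uses the same engine.

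Your diagnosis of the hypothesis is also correct: both your argument and the paper's (via Proposition~4.6, which is stated with $n=tu$, $u$ odd) require that $2^q$ be the \emph{exact} power of $2$ in $n$, and your $\tau_3\in\calP(8,3)$ example shows the statement fails under the weaker ``$2^q\mid n$''. In the paper's actual uses (e.g.\ $n=168,184$ with $m=7$) the exact-power condition holds, so the slip is in the wording rather than the applications.
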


This result is proved in \cite{littlewood} using a result from \cite{lidl}. We will give a self-contained proof here. 
First, an easy lemma from~\cite{boyd2}.

\begin{lemma} If $f\in \calP (n,m)$ and $f(-1)\neq 0$, then $\left| f(-1)\right| \geq 2^m$.
\end{lemma}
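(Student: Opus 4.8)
The plan is to factor out the known power of $x-1$ explicitly and then evaluate at $-1$. Since $f \in \calP(n,m)$, we have $(x-1)^m \mid f(x)$; because $(x-1)^m$ is monic, ordinary polynomial division shows that the quotient $g(x) = f(x)/(x-1)^m$ again has integer coefficients. First I would record this and write $f(x) = (x-1)^m g(x)$ with $g \in \Z[x]$.

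Next I would simply substitute $x = -1$, obtaining $f(-1) = (-2)^m g(-1)$, so that $|f(-1)| = 2^m\,|g(-1)|$. The hypothesis $f(-1) \neq 0$ forces $g(-1) \neq 0$; and since $g$ has integer coefficients, $g(-1)$ is a nonzero integer, hence $|g(-1)| \geq 1$. Combining these gives $|f(-1)| = 2^m |g(-1)| \geq 2^m$, which is the claim.

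The argument is essentially immediate once the integrality of the quotient is noted, so there is no serious obstacle; the only point that needs a word of justification is that divisibility by the monic polynomial $(x-1)^m$ in $\Q[x]$ already yields an integer-coefficient quotient (equivalently, one may invoke Gauss's lemma). One could also phrase the conclusion as: the exact order of $x-1$ in $f$ equals $m$ plus the order of $x+1$ in $f$, and each factor $x+1$ contributes a factor of $2$ to $|f(-1)|$ while the prefactor $(x-1)^m$ contributes exactly $2^m$.
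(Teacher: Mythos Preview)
Your proof is correct and essentially identical to the paper's: factor $f(x)=(x-1)^m g(x)$ with $g\in\Z[x]$, evaluate at $-1$ to get $|f(-1)|=2^m|g(-1)|$, and use that $g(-1)$ is a nonzero integer. The only difference is that you spell out the integrality of $g$ (via monicity of $(x-1)^m$), which the paper leaves implicit.
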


\begin{proof} Write $f(x)=(x-1)^mg(x)$. 
Then $0<\left| f(-1)\right| =2^m\left| g(-1)\right|$, where $g(-1)\in \mathbb{Z}$.
Therefore $\left| f(-1)\right| \geq 2^m$. 
\end{proof}

The next proposition is due to Berend and Golan \cite{littlewood} and extends \cite[Cor.~1]{boyd2}.

\begin{prop} If $n=t u$, with $t$ a power of 2 and $u$ odd, and $f$ is an element 
of $\calP (n,m)$ that is divisible by $x+1$, then $m<t-1$.
\end{prop}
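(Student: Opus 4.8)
The plan is to count factors of $1-x$ modulo~$2$, exactly as in the proof of the Divisibility clause of Theorem~\ref{basic}, but now cashing in the extra hypothesis $f(-1)=0$.

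First I would pass to an integral factorization of $f$. Since $(x-1)^m$ is monic and divides $f$ over $\Z$, the quotient $g=f/(x-1)^m$ lies in $\Z[x]$; since $f(-1)=0$ we get $g(-1)=f(-1)/(-2)^m=0$, so the monic polynomial $x+1$ divides $g$ over $\Z$, and hence
\[
f(x)=(x-1)^m\,(x+1)\,h(x)\qquad\text{for some }h\in\Z[x].
\]
Because $f$ has all coefficients $\pm1$, it reduces mod~$2$ to $1+x+\dots+x^{n-1}\not\equiv 0$, so $h\not\equiv 0$ as well. Now reduce the displayed factorization mod~$2$: in $\mathbb{F}_2[x]$ one has $x+1=x-1=1-x$, so $f(x)\equiv (1-x)^{m+1}h(x)\pmod 2$, and therefore $(1-x)^{m+1}$ genuinely divides $f$ modulo~$2$.

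On the other hand the Divisibility computation pins down the exact power of $1-x$ dividing $f$ modulo~$2$. Writing $n=tu$ with $t=2^k$ and $u$ odd, one has $(1-x)f(x)\equiv 1-x^n\equiv (1-x)^t(1+x^t+\dots+x^{t(u-1)})\pmod 2$, and the last factor equals the odd number $u$ at $x=1$, hence is not divisible by $1-x$ mod~$2$; cancelling one factor of $1-x$ in the integral domain $\mathbb{F}_2[x]$ shows $1-x$ divides $f$ modulo~$2$ exactly $t-1$ times. Comparing the two facts gives $m+1\le t-1$, i.e.\ $m<t-1$, as desired.

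There is no genuinely hard step; the two places wanting a moment's care are (i) justifying the integral factorization $f=(x-1)^m(x+1)h$ with $h\in\Z[x]$ --- one should use that $x\pm1$ are monic (equivalently, coprime in $\Q[x]$) rather than just multiplying divisibility relations --- and (ii) noting that it is precisely the collision $x+1\equiv x-1\pmod 2$ that upgrades ``$x+1$ divides $f$ over $\Z$'' into ``one extra factor of $1-x$ over $\mathbb{F}_2$,'' which is the only place the hypothesis $f(-1)=0$ is spent. An alternative derivation avoids step (i) entirely: Divisibility already gives $m\le t-1$, and if $m=t-1=2^k-1$ then the earlier proposition bounding $|d_{2,0}-d_{2,1}|$ (applied with $q=k$) forces $|d_{2,0}-d_{2,1}|\ge 2^{2^k-1}>0$, contradicting the identity $d_{2,0}-d_{2,1}=f(-1)=0$ valid for any LP divisible by $x+1$.
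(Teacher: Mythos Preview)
Your main argument is correct and is essentially the paper's own proof: write $f=(x-1)^m(x+1)h$ over $\Z$, reduce modulo~$2$ so that $x+1$ collapses to an extra factor of $1-x$, and compare the resulting lower bound $m+1$ with the exact count $t-1$ of $(1-x)$-factors in $f$ mod~$2$ coming from $(1-x)f\equiv 1-x^n$; the paper does the same count on $(x-1)f$ rather than on $f$, which is purely cosmetic. Your added care that $h\in\Z[x]$ and $h\not\equiv 0\pmod 2$ is welcome.

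One caveat on your alternative derivation: in this paper the bound $|d_{2,0}-d_{2,1}|\ge 2^{2^q-1}$ (Proposition~4.4) is \emph{deduced} from the present proposition together with Lemma~4.5, so invoking it to prove the present proposition would be circular within the paper's self-contained development.
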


\begin{proof} We are given that $f(x)=(x-1)^m (x+1) h(x)$ for some polynomial $h$.
From the proof of Divisibility above, we know that $x^n-1\equiv  (x-1)^t g(x) \pmod 2 $, 
where $g(x)$ is not divisible by $x-1$ modulo~2.
The result follows immediately:
\[
x^n-1\equiv (x-1)f(x)=(x-1)^{m+1}(x+1)h(x)\equiv (x-1)^{m+2} h(x)\equiv (x-1)^t g(x)
\]
so that $m+2\leq t$, or $m<t-1$, as claimed. 
\end{proof}

Proposition 4.4 then follows from Propositions 4.5 and 4.6, the Equivalence Lemma, 
and the fact that $\left| d_{2,0}-d_{2,1}\right| =\left| f(-1)\right|$, 
where $f$ is the LP defined from the equipowerful set $A$.

\section{Nonexistence of Equipowerful Bipartitions}\label{Nonexistence}

Here we show how the constraints of $\S $4 yield negative results. 
Theorem 5.1, together with the positive results in $\S $6, will prove Theorems 3.1, 3.2, and 3.3.

\begin{thm} { }$168,184\notin L_7$; $176,200,216,232\notin L_8$; $208,224\notin L_9$; and $192\notin L_{10}$.
\end{thm}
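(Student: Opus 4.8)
The plan is to treat each non-membership claim separately, but uniformly, as an exercise in combining the divisibility-style constraints already assembled in \S4 until a contradiction appears. In every case we suppose that $\calP(n,m)$ is nonempty, fix an $m$-equipowerful bipartition $A\cup B$ of $[n]$, and examine the vectors $\TI{d}_p$ for small primes $p$ — primarily $p=2$, but also $p=3$, $p=5$, and $p=7$ as needed — subject to Lemma~\ref{4.2} and Propositions~4.3--4.6. The key observation driving the choice of $p$ is that Proposition~4.3 is most powerful when $p-1$ divides $m$ or nearly does: for $m=7$ the prime $p=2$ gives $s=7$, $r=0$ (all $d_{2,j}$ congruent mod $2^7$); for $m=8$ the prime $p=3$ gives $s=4$, $r=0$; for $m=9$ the prime $p=2$ gives $s=9$, $r=0$, and $p=5$ gives $s=2$, $r=1$; and for $m=10$ the primes $p=2$ ($s=10$) and $p=11$ ($s=0,r=10$) are the natural ones to try. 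I would organize the proof as a short sequence of paragraphs, one per value of $m$, each opening with the relevant congruence/divisibility package and then a finite case analysis.

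First, for $168,184\notin L_7$: here $n=2^3u$ with $u=21$ and $u=23$ respectively, so Divisibility is satisfied and we must work harder. With $m=7$ and $p=2$, Proposition~4.3(1) forces $d_{2,0}\equiv d_{2,1}\pmod{2^7}$, while $d_{2,0}+d_{2,1}=0$ by Lemma~\ref{4.2}(2); hence $2d_{2,0}\equiv 0\pmod{128}$, i.e.\ $d_{2,0}$ is a multiple of $64$. Meanwhile Lemma~\ref{4.2}(1) bounds $|d_{2,0}|\le\lceil n/2\rceil$, which is $84$ for $n=168$ and $92$ for $n=184$, so $d_{2,0}\in\{-64,0,64\}$; the parity clause Lemma~\ref{4.2}(3) says $d_{2,0}\equiv C_{2,0}([n])\pmod 2$, and $C_{2,0}([n])=n/2$ is even in both cases, so $d_{2,0}$ is even, consistent with all three options. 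To kill the surviving possibilities I would bring in the $p=7$ analysis (for $n=168=24\cdot7$, the residues mod $7$ are perfectly balanced, $C_{7,j}([168])=24$ for all $j$) together with Proposition~4.3 applied at $p=7$ with $m=7=1\cdot 6+1$, which gives $s=1$, $r=1$: each $d_{7,j}$ is divisible by $7$ and $d_{7,0}\equiv 0\pmod{49}$, forcing $d_{7,0}=0$ and heavily constraining the rest; similarly for $n=184$ one uses $p=23$ or iterates the $p=2$ argument on the two halves. The precise finish is a bounded search, which is exactly the kind of calculation I will not grind through here but which the $\S5$ proof presumably automates or tabulates.

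Next, $176,200,216,232\notin L_8$. For $m=8$ the cleanest lever is $p=3$: $8=4\cdot2+0$, so Proposition~4.3(1) makes $d_{3,0},d_{3,1},d_{3,2}$ all congruent mod $3^4=81$, and with $d_{3,0}+d_{3,1}+d_{3,2}=0$ this forces each $d_{3,j}$ itself to be a multiple of $81$ unless they are not all equal — more carefully, $d_{3,0}\equiv d_{3,1}\equiv d_{3,2}\pmod{81}$ plus sum zero gives $3d_{3,0}\equiv 0\pmod{81}$, so $27\mid d_{3,j}$ for every $j$. Now $|d_{3,j}|\le\lceil n/3\rceil$, which is at most $78$ for $n=232$, so each $d_{3,j}\in\{-54,-27,0,27,54,78?\}$ — only the multiples of $27$ in range — and the sum-zero and parity (Lemma~\ref{4.2}(3), with $C_{3,j}([n])$ depending on $n\bmod 3$) conditions cut this down to a handful of vectors. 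For $n=176$ and $n=232$, which are even but $\equiv 2\pmod 3$, the residue counts $C_{3,j}$ are $(59,59,58)$-type, giving a parity obstruction; for $n=200$ and $n=216$ one combines the $p=3$ data with $p=2$ (where $m=8$, $p=2$ has $s=8$, $r=0$, forcing $128\mid d_{2,0}$ and hence $d_{2,0}\in\{0,\pm128\}$ after the bound, and $\pm128$ is impossible for $n\le232$ since $\lceil n/2\rceil\le116$, so $d_{2,0}=0$) and with Proposition~4.6 or 4.4 to rule out the residual balanced configurations. I expect the $n=216=8\cdot27$ case to be the most delicate, because $216$ is simultaneously divisible by $8$ and by $27$, so the coarse divisibility constraints are all comfortably satisfied and one must extract a genuine contradiction from the finer cyclotomic congruences in Proposition~4.3(2) at $p=2$ applied after peeling — this is where I anticipate the main obstacle.

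Finally, $208,224\notin L_9$ and $192\notin L_{10}$. For $m=9$, $p=2$ gives $s=9$, $r=0$: all $d_{2,j}$ congruent mod $2^9=512$, so $d_{2,0}$ is a multiple of $256$; but $|d_{2,0}|\le\lceil n/2\rceil\le112$ for $n\le224$, forcing $d_{2,0}=0$, i.e.\ perfect parity balance. Then I would invoke $p=5$ (with $9=2\cdot4+1$, so $s=2$, $r=1$: $25\mid d_{5,j}$ for all $j$ and a linear congruence mod $125$ on $d_{5,0}$), noting $208\equiv3$ and $224\equiv4\pmod5$, and combine with the parity-balance conclusion and Lemma~\ref{4.2} to reach an arithmetic contradiction; $p=13$ ($208=16\cdot13$) is an extra tool for $n=208$. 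For $m=10$ and $n=192=2^6\cdot3$: Divisibility only demands $2^6>10$, which holds, so this is a borderline case. Here $p=2$ with $10=10\cdot1+0$ gives $s=10$, so $d_{2,j}$ all congruent mod $2^{10}=1024$; since $|d_{2,0}|\le96$ this forces $d_{2,0}=0$. Then $p=3$ with $10=5\cdot2+0$ gives $s=5$: $d_{3,j}$ all congruent mod $3^5=243$, and with $|d_{3,j}|\le64$ and sum zero this forces every $d_{3,j}=0$. Combining the vanishing of $\TI{d}_2$ and $\TI{d}_3$ (so the bipartition is balanced on every residue class mod $6$, hence mod $2$ and mod $3$) with the finer clause Proposition~4.3(2) at a prime where $r\ge1$ — e.g.\ $p=11$, $m=10=0\cdot10+10$, giving $s=0$, $r=10$ and a full system of linear congruences mod $11$ on $d_{11,0},\dots,d_{11,9}$ — should overdetermine the $d$-vectors and yield the contradiction. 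In every case the skeleton is identical: pin down $\TI{d}_p$ modulo a large power of $p$ from Proposition~4.3, intersect with the size bound of Lemma~\ref{4.2}(1) and the parity of Lemma~\ref{4.2}(3), and exhaust the finitely many survivors using a second prime or Propositions~4.4--4.6. The real work — and the step most likely to need either a clever extra congruence or an explicit (ILP-assisted) enumeration, as the paper's $\S5$ will supply — is discharging the handful of residual "balanced" configurations in the cases ($n=216$, $m=8$ and $n=192$, $m=10$) where all the easy divisibility hurdles are cleared.
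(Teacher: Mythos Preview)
Your overall strategy---squeeze $\TI{d}_p$ for small primes using Lemma~4.2 and Propositions~4.3--4.6 until nothing survives---is exactly the paper's strategy. But you have misidentified which cases are easy and which are hard, and in the easy cases you have chosen the wrong primes.

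The most concrete gap is your treatment of $200,216,232\notin L_8$. You call $n=216$ ``the most delicate'' and plan an elaborate $p=3$ and $p=2$ analysis for all three. In fact these three are the \emph{trivial} cases: for $m=8$, Divisibility requires $16\mid n$, and none of $200=8\cdot25$, $216=8\cdot27$, $232=8\cdot29$ is a multiple of $16$. The paper dispatches all three in one sentence. The only nontrivial $L_8$ case is $176=16\cdot11$, which you lumped in with the others; the paper devotes a full paragraph to it, running through $p=2,3,5,7,11$ and finishing with an ILP sweep over $9\cdot82$ residual $(\TI{d}_7,\TI{d}_{11})$ configurations.

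Your prime choices elsewhere are also off. For $208,224\notin L_9$ the paper uses $p=3$: with $9=4\cdot2+1$ one has $s=4$, $r=1$, so $81\mid d_{3,1}$, but $|d_{3,1}|\le75$, hence $d_{3,1}=0$, contradicting the parity clause since $C_{3,1}([n])$ is odd. That is a one-line kill; your $p=2$/$p=5$ route is not completed and is unnecessary. For $192\notin L_{10}$ the paper uses $p=5$ (with $s=r=2$), forcing $\TI{d}_5=(\pm25,\mp25,0,0,0)$ and then contradicting the $j=1$ congruence of Proposition~4.3(2) modulo $125$. Your proposed $p=11$ analysis contains an arithmetic slip: for $m=10$, $p=11$ one has $10=1\cdot10+0$, so $s=1$, $r=0$, not $s=0$, $r=10$ (recall $0\le r\le p-2$).

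Finally, for $168,184\notin L_7$ you are right that these need a bounded search, but the paper's search is far more substantial than ``bring in $p=7$'' or ``$p=23$'': for $184$ it walks $p=2,3,5,7,11,13$ in sequence, each stage pinning $\TI{d}_p$ down to one or two vectors, until at $p=17$ no feasible $d_{17,16}$ exists; for $168$ it requires filtering $301388$ candidate $\TI{d}_{11}$-vectors down to a few hundred and then an ILP check of each, about two days of computation. Your sketch does not indicate awareness of this scale.
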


\begin{proof} Because 16 does not divide 200, 216, or 232, these follow from Divisibility.
The cases of 208 and 224 are handled as follows, where $m=9$. 
Let $p=3$. Then $s=4$, $r=1$, and by Lemma 4.3 and Proposition 4.4, $\left| d_{3,1}\right| \leq 75$
and $d_{3,1}$ is divisible by 81. 
Therefore $d_{3,1}=0$, contradicting the fact that $C_{3,1}([n])$ is odd. 
For 192 and $m=10$, consider $p=5$; then $s=r=2$. 
By Lemma 4.3, $\TI{d}_5$ must be $(\pm 25,\mp 25,0,0,0)$. 
But then the $j=1$ case of Proposition 4.3(2) looks at the sum of $(\mp 25,0,0,0)$, 
which is not divisible by $p^{s+1}$, or 125.

The case of 176 takes more work. Consider first $p=2$, 3, and 5. For these cases, $r=0$. 
A search using the constraints in $\S $4 shows that $\TI{d}_2=(0,0)$, 
$\TI{d}_3=(\pm 27,\pm 27,\mp 54)$ (replacing $A$ with its complement if
necessary, we may assume $d_{3,0}=27$), and $\TI{d}_5$ is one of 16 vectors. 
These cases lead to residue counts as follows, where the first entry is for residue 0: 
$p=2$: $(44,44)$; $p=3$: $(43,43,2)$, and $p=5$:
\begin{align*}
& (23,10,10,10,35),(23,10,10,35,10),(23,10,35,10,10),(23,35,10,10,10),(18,5,5,30,30),\\
&(18,5,30,5,30),(18,5,30,30,5),(18,30,5,5,30),(18,30,5,30,5),(18,30,30,5,5),(28,15,15,15,15),\\
&(13,0,25,25,25),(13,25,0,25,25),(13,25,25,0,25),(13,25,25,25,0),(8,20,20,20,20).
\end{align*}
All the $p=5$ cases, except the last, resolve as follows. Consider $(28,15,15,15,15)$. 
There are 36 numbers in $[176]$ that are $0 \bmod 5$ and 24 of those are congruent to 0 or $1 \bmod 3$. 
But we need 28, so there must be at least four that are congruent to $2 \bmod 3$, violating
the 2 in $(43,43,2)$. 
This type of counting argument settles the first 11 cases. The next four cases are similar, 
where one contradicts the 2 by considering two classes with 25 elements each. 
That leaves only the case of $(8,20,20,20,20)$. To finish we look at the constraints for $p=7$ and 11. 
We can filter them down as was done for $p=5$, leaving nine choices for $\TI{d}_7$ 
and 82 for $\TI{d}_{11}$. 
When ILP is set to work on all $9\cdot 82=738$ possibilities with these five primes, and with 
power identities up to exponent 6, none of the cases leads to a solution. So $176\notin L_8$.

The remaining two cases, 168 and 184, are more complicated, but yield to a detailed 
computer-aided analysis of the constraints for small primes. We start with $184$. 
Let $n=184$ and $k=6$ and assume that $A$, $B$ witness $L_7(184)$; so $\left| A\right| =\left| B\right| =92$. 
We will consider the primes $p\leq 13$, learning all possibilities for $\TI{d}_p$ 
in each case, taking into account previous cases as we move up. Then at $p=17$ there will be no possibility 
for $d_{17,0}$ consistent with the results for smaller primes, proving $184\notin L_7$.

We start with $p=2$; then $s=7$ and $r=0$. We have
\begin{itemize}
\item  { }$\left| d_{2,0}\right| \leq 92$ by Lemma 4.2;
\item  { }$d_{2,1}=-d_{2,0}$, by Lemma 4.2;
\item  { }128 divides $d_{2,1}-d_{2,0}$, by Proposition 4.3;
\item  { }$\left| d_{2,1}-d_{2,0}\right| \geq 128$, by Proposition 4.4.
\end{itemize}

The first three mean that $\TI{d}_2$ is $(64,-64)$, $(0,0)$, 
or $(-64,64)$. The last item eliminates $(0,0)$.
Switching $A$ and $B$ if necessary, we can assume that $d_{2,0}\geq 0$. 
This proves $\TI{d}_2=(64,-64)$, which means
that the even-odd distribution in the witnessing sets is $(78,14)$ for $A$, 
and $(14,78)$ for $B$. We can invoke this switching trick once only.

Now let $p=3$; then $s=3$ and $r=1$. We have

\begin{itemize}
\item { }$\left| d_{3,j}\right| \leq 62$, by Lemma 4.2;

\item { }$d_{3,0}+d_{3,1}+d_{3,2}=0$, by Lemma 4.2;

\item { }$d_{3,0}$ is even and $d_{3,1}$, $d_{3,2}$ are odd by Lemma 4.2, 
because $(C_{3,0},C_{3,1},C_{3,2})=(62,61,61)$;

\item { }$27$ divides each $d_{3,j}$, by Proposition 4.3
\end{itemize}

These conditions mean that $d_{3,2}=\pm 27$; then only these vectors satisfy all four conditions: 
$(-54,27,27)$, $(54,-27,-27)$, $(0,-27,27)$, and $(0,27,-27)$. 
But the ones involving $\pm 54$ fail when the constraint for $p=2$ is considered, as follows. 
When $d_{3,0}=-54$, $B$ must have exactly 58 multiples of 3. 
But there are only 31 odd numbers in $[n]$ divisible by 3 and $B$ has only 14 evens by the $p=2$ work.
So if all 31 are in the odd part of $B$, and all of the 14 evens in $B$ are divisible by 3, 
the total is $31+14=45$, short of the needed 58.
A similar argument, interchanging $A$ and $B$, and even and odd, eliminates $d_{3,0}=54$, 
and so $\TI{d}_3$ must be one of $(0,\pm 27,\mp 27)$.

For larger primes, we use ILP; all the constraints are easily programmable. 
One starts by finding the feasible values of $d_{p,p-1}$.
For each one of those one finds the feasible values of $d_{p,p-2}$. 
Once we have $(d_{p,r},d_{p,r+1,}\dots ,d_{p,p-1})$, we can use
Proposition 4.3 (and the other constraints) to quickly find all feasible extensions 
to the full vector $\TI{d}_p$. 
So we need only work down to $d_{p,r}$. When $p=5$, this yields that 
$\TI{d}_5$ is either $(15,5,-5,-15,0)$ or $(5,-15,15,-5,0)$.
Moreover, this case eliminates one of the $\TI{d}_3$ vectors, 
leaving $\TI{d}_3=(0,-27,27)$. 
The next case gives $\TI{d}_7=(7,-7,0,0,0,0,0)$. 
And then $p=11$ gives $\TI{d}_{11}=(-3,-1,3,-5,5,-3,1,3,0,0,0)$
(and this eliminates the second $\TI{d}_5$ vector). 
We next get $\TI{d}_{13}=(1,-1,2,-2,0,2,0,0,0,-2,0,2,-2)$,
and when we move to $p=17$, we find that there are no feasible values of $d_{17,16}$.

And now the last case: suppose $A,B$ is a 7-equipowerful bipartition of $[168]$. 
As was done for 184, we can deduce that $\TI{d}_2=(-64,64)$, 
$\TI{d}_3=(0,0,0)$, 
$\TI{d}_5=(\mp 10,0,\pm 10,\mp  5,\pm 5)$, and 
$\TI{d}_7=(0,0,0,0,0,0,0)$ (there are $42+119=161$ other possibilities for 
$\TI{d}_7$ but they are proved infeasible when
the constraints for $p=11$ are brought into play). 
When $p=11$, we have $r=\text{mod}(m,10)=7$, and this means we need consider only vectors
of length $p-r=4$; they extend to 11-vectors using Proposition 4.3. 
There are 65536 possible quadruples and they extend to a set of 301388 11-vectors.
Now, because $\TI{d}_2=(-64,64)$, we know that $A$ has 
10 evens and 74 odds (and vice versa for $B$). 
Let $\TI{D}$ be one of the 301388 possibilities for 
$\TI{d}_{11}$. 
Let $A_\TI{D}$ be the counts in the residue classes mod 11 in $A$ determined by $\TI{D}$. 
If some $a\in A_\TI{D}$ has the form $8+q$ ($q\geq 1$), 
then because at most eight entries in the 11-residue class of $A$ can lie in 
the odds of $A$, $q$ of them must lie in the evens in $A$. 
If the sum of the $q$-values over such entries $a$ exceeds 10, the even count of $A$, 
we know that $\TI{D}$ is infeasible. 
The same argument applies to $B$, with odds instead of evens. 
Further, for any pair of residue classes mod 11, $[168]$ has at most 16 numbers 
among the odds congruent to one of the two residues mod 11. 
So if $\TI{D}$, as above, forces two residue classes to have 
$10+16+1=27$ or more elements in $A$, we have a contradiction. 
And the same applies to $B$. Filtering the 301388 possibilities for 
$\TI{D}$ leads to only 2640 vectors. 
It takes about 22 seconds for ILP to check each one against the constraints for
$p=2,3,5$, and $7$ (using the first choice for $\TI{d}_5$); 
of the 2640 vectors, 309 turn out to be feasible (this takes
about 18 hours), so we then move to ILP with sum constraints added. 
Using the power identity up to exponent 4 is usually enough, but sometimes (34 cases) 
all the powers (up to 6) were needed. All turn out to be infeasible. 
Then the 2640 vectors are put through the same grinding machine with the other choice 
for $\TI{d}_5$, and the results are the same, proving $168\notin L_7$. 
(More detail in the second case: the sieving of vectors works thus: $301388\to 2640\to 245\to 23$.) 
The complete proof for $n=168$ took about two days of computation using \textit{Mathematica}{'}s 
ILP function, which calls COIN-OR. 
\end{proof}

Positive results regarding $L_m$ (as in the next section) can be certified correct by 
simple arithmetic in an instant. But we have no idea of how to succinctly certify negative results.
The 168 result had been proved earlier by the second author, relying on ILP as in 
the 184 case and requiring a few days on a cluster of 100 computers. 
It is important that this case gave the same results when carried out on two entirely 
different platforms and using somewhat different algorithms. 
For ILP work, the second author uses lp$\_$solve, the third author uses 
the \textit{SAS} MILP solver, and the fourth author uses \textit{Mathematica}.

\section{Existence of Equipowerful Bipartitions}\label{Existence}

Here we will find equipowerful sets that prove Theorems 3.1, 3.2, and 3.3. 
Symmetry plays a key role even in Theorem 3.1, where it is not explicitly mentioned. 
All of our computational evidence supports the following idea.

\begin{symconj} If there is an LP of length $n$ and exact order~$m$, then there is one that
is $(-1)^m\,$symmetric.
Alternatively, if there is an equipowerful bipartion of length $n$ and exact order~$m$
then there is a witnessing set that is $(-1)^m\,$symmetric. 
\end{symconj}

For instance, the 3-equipowerful set $\{0,2,6,7,8,10\}\subset [12]$ given in $\S $4 is antisymmetric. 

This idea has two important implications::
(1) Searches should be streamlined (i.e., made feasible) by just assuming $(-1)^m\,$symmetry, as needed.  
(2) The Symmetry Conjecture should be tested in all situations where we know that
$\calP (n,m)$ is nonempty, and testing is feasible.

In fact, many of our searches would not have been possible without making the assumption in (1)
Moreover, we were able to succeed for all of the (infinitely many) $n$ implicitly asserted in the 
tables of $L_m$ above by using the regenerative pairs to be described shortly.

The characterizations for $m\leq 6$ (Table 1) were all known, but the symmetry 
aspect of Theorem 3.2 is new for $3\leq m\leq 8$. 
To prove Theorem 3.1 (as well as the known characterizations for $m\leq 6$), 
one can start with the trivial $m=0$ case and use Doubling in Theorem~\ref{basic} to move up, 
while finding enough additional examples so that Addition in Theorem 4.1 
leads to the infinite family of witnessing sets. 
For example, when $m=2$, the base case is $\{0,3\}\subset [4]$ and Addition handles the rest. 
When $m=3$, doubling the $m=2$ case gives $8+8\N$, leaving $12+8\N$ unresolved. 
But once we have the $n=12$ witness, Addition ($12+8$, $12+8+8$, $\dots$) takes care of the rest
so that this method handles all $m\leq 8$. 
Since Addition does not preserve symmetry, this method will not 
yield sets having the desired symmetry properties.

\begin{table}[ht]\label{sporadic}
$
\begin{array}{|l|l|}\hline
 m &  \\ \hline
 3 & 12 \\ \hline
 4 & 24 \\ \hline
 5 & 40, 56 \\ \hline
 6 & 48, 72, 88, 104 \\ \hline
 7 & 112, 200, 216, 232, 248, 264, 280, 296, 312, 328 \\ \hline
 8 & 144, 208, 240, 272, 304 \\ \hline
 9 & 192, 240 \\ \hline
\end{array}
$
\vspace*{0.1in}
\caption{Sporadic cases for symmetry.}
\end{table}

We found sets for all the needed sporadic cases, which are listed in Table~3.
The largest example shows $328\in L_7$ and is the antisymmetrization of 
\begin{align*}
\{& 0,3,5,7,9,11,12,15,16,18,19,21,23,27,29,30,33,35,37,39,41,42,45,47,\\
  & 49,51,53,55,57,59,62,63,65,66,67,69,71,73,75,77,81,83,84,87,89,91,93,95,\\
  & 97,99,100,102,106,107,111,112,113,114,116,117,119,122,125,128,129,130,131,\\
  & 133,134,137,139,143,144,146,149,150,152,153,160,161,162,163\}.
\end{align*}
The sum of the 6th powers of this set is $28863168757954570$, which equals 
the 6th power sum for its complement in $[328]$; 
this power-sum equality holds for all powers up to~$6$. 
The corresponding LP factors into the product of $(1-x)^7$ and an irreducible 
polynomial of degree 320. 
The witnesses for all 25 needed cases follow from the data in Tables 4 and 5.

The raw search space for the 368 example has more than $10^{97}$ sets, so clearly 
some tricks are needed to get the search to work. 
The main tool is integer linear programming: a binary variable is used for each value 
in $[n]$ and the power-sum constraints are then linear equations.
In addition we use the following constraints and tricks.

\noindent 1. { }We assume the Symmetry Conjecture, which halves the variable count. 
By Symmetry in Theorem~\ref{basic}, this means that the power constraints need
only go to the ($m-2$)nd power, as the last one comes for free.

\noindent 2. { }We use the modular constraints for small primes derived from the results in $\S $4. 
Further, we filter the constraints down to ones that are consistent with the 
assumed symmetry property.
\pmb{  }Recall that the constraints are first derived for the vectors $\bar{d}_p$; they
are then used to get the counts for the congruence classes in the set $A$. 
To filter the set of $\TI{d}_p$ as needed for $(-1)^m$ symmetry, 
keep only those $\TI{d}_p$ for which $d_{i,p}=(-1)^md_{n-i-1,p}$ 
where $0\leq i\leq p-1$ and the indices are reduced modulo~$p$.

\noindent 3. { }To avoid problems with the very large numbers that arise, we shift the domain 
from $[n]$ to the interval $\left[-\frac{n}{2}+1,\frac{n}{2}\right]$.
This is allowed because if $A\overset{m}{=}B$ as subsets of $[n]$, then 
$A+t\overset{m}{=}B+t$, a fact that is easily proved by using the Equivalence
Lemma and observing that multiplication by $x^t$ does not affect the power 
of $1-x$ that divides a polynomial.

\noindent 4. { }For the same reason as in (3), we replace powers by binomial coefficients. 
The definition of an $m$-equipowerful number uses powers $x^j$, but any family 
of $m$ polynomials of degree $0,1,\dots ,m-1$ that takes integer values 
on integer arguments can be used instead. 
If the ILP search uses as its main constraint not power identities but instead 
equalities over $A,B$ of the binomial coefficient polynomials
then the size of the numbers is substantially reduced. 
Consider the search to show $240\in L_9$; using powers (and also points 1 and
3 above) involves numbers as large as $120^8$, about $4\cdot 10^{16}$, compared to the
binomial coefficient $\binom{120}{8}$,
which is about $8\cdot 10^{11}$.

Addition from Theorem~\ref{basic} fails to preserve symmetry, so a proof of Theorem 3.2 
requires a new type of rule that does. 
This is what the concept of \textit{regenerative pairs} accomplishes. 
We stumbled on this idea when we realized that the Symmetry Conjecture had not
been proved even for the case $m=3$. 
To handle that case we needed a new way to go from 12 to $20,28,\dots$. 
We found a way to do this and then found several other instances where 
$f\in \calP (n,m)$ could be extended to an LP in 
$\calP (n+n',m)$ so that symmetry is preserved. 
In short, we found a new type of ``addition rule'' that respects symmetry.

\begin{thm}\label{RPthm} 
Assume that $f$ and $g$ are LPs of lengths $n$ and $\delta$.  Fix a positive integer~$m$
and let $s = (-1)^m$ and $S$ be the corresponding $s$-symmetrization operator.
Define LPs
\[
f_1 = f \lor g, \qquad f_2 = f \lor g \lor sg^*
\]
of lengths $n+\delta$ and $n+2\, \delta$.
Then 
\[
S(f) \in \calP (2 n,m) \; \text{and} \; S(f_1) \in \calP (2n + 2\delta,m) 
\quad \text{ imply that } \quad S(f_2) \in \calP(2n+4\delta,m).
\]
\end{thm}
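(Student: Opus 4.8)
The plan is to work entirely at the level of polynomials and divisibility by $(x-1)^m$, using the bookkeeping of lengths that the join and symmetrization operators provide. Write $N = n + 2\delta$ for the length of $f_2$, so that $S(f_2)$ has length $2N = 2n+4\delta$. First I would record the basic identities: $S(f) = f \lor (sf^*)$ has length $2n$; $S(f_1) = f_1 \lor (sf_1^*)$ has length $2(n+\delta)$; and, since $(f \lor g \lor sg^*)^* = sg \lor g^* \lor \dots$ after the usual bookkeeping with the reversal operator, I would expand $S(f_2) = f_2 \lor (sf_2^*)$ explicitly as a concatenation of six blocks of the form $f, g, sg^*, \pm g, \pm g^*, \pm f^*$. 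The point is that $S(f_2)$, as a coefficient sequence, is built from exactly the same blocks as $S(f)$ and $S(f_1)$, just arranged differently — and divisibility by $(x-1)^m$ is a set of linear conditions on the coefficient sequence (vanishing of the first $m$ moments, via the Equivalence Lemma).

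Next I would exploit that linearity. The hypotheses say that two particular $\pm 1$-sequences — the block arrangement giving $S(f)$ and the one giving $S(f_1)$ — each have vanishing first $m$ moments (equivalently are divisible by $(x-1)^m$). I want to show a third arrangement does too. The natural device is to find an integer linear combination: express the polynomial $S(f_2)$ (suitably shifted by powers of $x$ to align blocks) as $\alpha \cdot (x^a S(f_1)) + \beta \cdot (x^b S(f)) + (x-1)^m(\cdots)$ for appropriate shifts $a,b$ and small integer constants $\alpha,\beta$ (I expect $\alpha$ and $\beta$ to be $\pm 1$, possibly with one of them being a difference of two shifted copies of $S(f)$). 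Since multiplication by $x^t$ does not change the power of $(x-1)$ dividing a polynomial, and since $(x-1)^m$ divides both $S(f)$ and $S(f_1)$, such an identity immediately forces $(x-1)^m \mid S(f_2)$. Combined with the observation that $S(f_2)$ is visibly a Littlewood polynomial of the right length, this gives $S(f_2) \in \calP(2n+4\delta, m)$.

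The main obstacle is producing that algebraic identity cleanly — i.e., matching up the six blocks of $S(f_2)$ against the four blocks of $S(f_1)$ and the two of $S(f)$ with the correct signs and shifts. The sign bookkeeping is delicate because $s = (-1)^m$ enters through every reversal, and because $(g \lor sg^*)^*$ and related reversals must be simplified carefully ($(u\lor v)^* = v^* \lor u^*$ only after accounting for which block sits where). I would organize this by introducing the shift exponents once and for all ($x^n$ in front of $g$, $x^{n+\delta}$ in front of $sg^*$, etc.), writing $f_2(x) = f(x) + x^n g(x) + s x^{n+\delta} g^*(x)$ and $S(f_2)(x) = f_2(x) + s x^N f_2^*(x)$, then expanding $f_2^*$ via $(f_2)^*(x) = x^{N-1} f_2(1/x)$ term by term. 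Once the six blocks are laid out with explicit shifts, the comparison with the similarly expanded $S(f_1)(x) = f_1(x) + s x^{n+\delta} f_1^*(x)$ and $S(f)(x) = f(x) + s x^n f^*(x)$ should reduce to a short, purely formal verification that the leftover is a multiple of $(x-1)^m$ — in fact I expect the leftover to vanish identically or to be exactly a shifted copy of one of the hypothesized polynomials, making the divisibility conclusion immediate.
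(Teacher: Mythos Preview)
Your proposal is correct and follows essentially the same strategy as the paper: expand $S(f)$, $S(f_1)$, $S(f_2)$ into their block forms and find the linear relation among them modulo $(x-1)^m$. In fact the leftover you anticipate vanishes identically --- the exact polynomial identity is $S(f_2) = (1+x^{2\delta})\,S(f_1) - x^{2\delta}\,S(f)$ (so it is $S(f_1)$, not $S(f)$, that appears in two shifted copies), and the paper's proof is just a step-by-step derivation of this relation.
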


In words: if an LP $f_1$ extends an LP $f$, and both $S(f)$ and $S(f_1)$ have order $m$, then
there is a closely related $f_2$, extending $f_1$, such that $S(f_2)$ has order~$m$.
Continuing in this way gives a sequence of
LPs of order $m$ whose lengths form an arithmetic progression.

\begin{defn}\label{RPdef}
If $f$, of length~$n$, and $f_1 = f \lor g$, of length $n_1 = n+\delta$, satisfy the hypotheses of the 
theorem, then $(f,f_1)$ is
said to be a Regenerative Pair (RP) for $(2n,2n_1)$.
\end{defn}

\noindent \textit{Examples.}  1. The LPs $1-x+x^2-x^3-x^4-x^5$ and $1-x+x^2-x^3-x^4-x^5+x^6+x^7+x^8+x^9$ 
are an RP for $(12,20)$: the antisymmetrizations of the polynomials are in 
$\calP (12,3)$, $\calP (20,3)$, respectively.

\noindent 2. If $f$ is such that $S(f)\in \calP (2\, n,m)$, where $S$ is $(-1)^m\,$symmetrization, 
then $(f,F)$ is an RP for $(2\, n,4\, n)$, where $F$ is the 
length-$2n$ initial part of $S(f)\lor -S(f)$.

\noindent 3. For all $m$, the pair $\tau_m,\tau_{m+1}$ is an RP for $(2^{m+1},2^{m+2})$.

\begin{proof}  Throughout this proof $\equiv $ denotes congruence modulo~$(x-1)^m$.
Let $n_1 = n + \delta$ and $n_2 = n+ 2\,\delta$ be the lengths of $f_1$ and $f_2$.
From the hypotheses,
\begin{align*}
S(f)  & = f+s x^n f^* \equiv 0 \\
S(f_1) = f_1+s x^{n+\delta}f_1^* & = f + x^n g + s\,x^{n+\delta} g^* + s\, x^{n+2\delta} f^* \equiv 0. 
\end{align*}
Subtracting the first congruence for $S(f)$ from the one for $S(f_1)$,
and dividing by $x^n$, gives 
\[
-s\, f^*+g+s\, x^{\delta }g^*+s\, x^{2\delta }f^*\equiv 0.
\] 
Moving the $f^*$ terms to the right side gives 
$S(g)\equiv s\, (1-x^{2\, \delta })f^*$.
Multiplying by $1+x^{2\delta }$ then gives 
\[
\left(1+x^{2\, \delta }\right)S(g)\equiv s\, (1-\, x^{4\, \delta })f^*.
\]
Multiply by $x^n$, replace $s\, x^n\, f^*$ by $-f$, bring everything to the left
side, and expand to get
\[
f+x^ng+s x^{n+\delta }g^*+x^{n+2\delta }g+s\, x^{n+3\delta }g^*+s\, x^{n+4\, \delta }f^* \equiv 0.
\]
This can be carefully checked, from the definition of $f_2$, to give the 
desired conclusion: $S(f_2)~\equiv~0$.
\end{proof}

The power of this theorem is that $(f_1,f_2)$ becomes an RP, with lengths $n_1$ and $n_1+\delta$, so
the theorem can be applied again.
We can iterate forever, concluding that 
$L_m$ contains $2n,2n+2\delta ,2n+4\delta ,2n+6\delta ,\dots $. 
Moreover, all the witnesses will be $(-1)^m\,$symmetric.

\begin{cor} With notation as in Theorem \ref{RPthm}, for any $j\geq 0$, define
\[
G_j = \bigvee_{i=1}^j\,h_i,
\]
where $h_i=g$ if $j$ is odd and $h_i=s\, g^*$ if $j$ is even. 
Then $S(f \lor G_j)$ is in $\calP(2b+2j\delta,m)$ and is $(-1)^m\,$symmetric.
\end{cor}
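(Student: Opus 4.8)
The plan is to prove the Corollary by induction on $j$, using Theorem~\ref{RPthm} repeatedly. The base cases $j=0$ and $j=1$ are exactly the two hypotheses packaged into an RP: when $j=0$, $G_0$ is the empty join, so $f\lor G_0 = f$ and $S(f)\in\calP(2n,m)$ by assumption; when $j=1$, $G_1 = g$ (using the convention in the statement — I would first silently correct the evident typo, reading ``$h_i=g$ if $i$ is odd and $h_i = sg^*$ if $i$ is even''), so $f\lor G_1 = f\lor g = f_1$ and $S(f_1)\in\calP(2n+2\delta,m)$ is again a hypothesis. Also I would note $2b$ should read $2n$, matching the theorem's notation.

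For the inductive step, the key observation is that the construction is \emph{self-similar}: if I set $f' = f\lor g$ (length $n+\delta$) and keep the same $g$, then Theorem~\ref{RPthm} tells me that once $S(f')$ and $S(f'\lor g)$ both have order $m$, so does $S(f'\lor g\lor sg^*)$. But $S(f') = S(f_1)$ has order $m$ by hypothesis, and $S(f'\lor g) = S(f\lor g\lor g)$ — hmm, this is not quite $S(f_2)$. The cleaner route, and the one I would actually carry out, is to observe that the pair $(f_1, f_2)$ is itself an RP for $(2n+2\delta,\,2n+4\delta)$ with the \emph{same} increment $\delta$ and the \emph{same} polynomial $g$: indeed $f_2 = f_1\lor sg^*$, and applying Theorem~\ref{RPthm} with $(f_1, sg^*)$ in place of $(f,g)$ — note $(sg^*)^* = s g$ since $s^2=1$ and reversal is an involution — regenerates the hypotheses one step further. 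Thus by induction the chain $S(f), S(f_1), S(f_2), S(f_3),\dots$ all have order $m$, where $f_{i+1} = f_i \lor h_{i+1}$ with $h_i$ alternating between $g$ and $sg^*$ starting from $h_1 = g$. Since $f_j = f\lor G_j$ by the associativity of $\lor$ and the definition of $G_j$, and since $\ell(f\lor G_j) = n + j\delta$ so that $\ell(S(f\lor G_j)) = 2n + 2j\delta$, we get $S(f\lor G_j)\in\calP(2n+2j\delta,m)$, and it is $(-1)^m$-symmetric because it is the image of the $s$-symmetrization operator with $s = (-1)^m$.

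The one point requiring genuine care — and the step I expect to be the main obstacle — is verifying that applying Theorem~\ref{RPthm} to the shifted pair $(f_1, sg^*)$ really does reproduce its own hypotheses, i.e. that the theorem's output ``$S(f_2)\in\calP(2n+4\delta,m)$'' coincides with the hypothesis ``$S(F_1)\in\calP(2N+2\Delta,m)$'' needed for the next application with $(F,G) = (f_1, sg^*)$, $N = n+\delta$, $\Delta = \delta$. Here $F\lor G = f_1\lor sg^* = f_2$, so the required hypothesis is precisely $S(f_2)\in\calP(2n+4\delta,m)$, which is exactly the conclusion just obtained — so the induction closes. I would spell out this matching of indices carefully (including checking $(sg^*)^* = sg$ so that the ``$g^*$'' appearing in the theorem's $f_2$-formula for the shifted pair is $sg$, keeping the alternation consistent), since this is where an off-by-one or a sign slip would break the argument; everything else is bookkeeping with $\lor$ and lengths.
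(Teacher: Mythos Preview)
Your proposal is correct and follows essentially the same route as the paper: the paper's entire argument is the one-line remark preceding the Corollary that ``$(f_1,f_2)$ becomes an RP, with lengths $n_1$ and $n_1+\delta$, so the theorem can be applied again,'' and you have correctly unpacked this by taking $(F,G)=(f_1,\,s g^*)$ as the new pair, checking that $(sg^*)^* = sg$ keeps the alternation going, and closing the induction. Your identification of the two typos ($i$ for $j$ in the definition of $h_i$, and $2n$ for $2b$) is also right.
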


Continuing with the $(12,20)$ example, the antisymmetrization of $f$ corresponds to 
$\{0,2,6,7,8,10\}$, while the same for $F$ gives $\{0,2,6,7,8,9,14,15,16,18\}$. 
These two sets are antisymmetric witnesses to $12,20\in L_3$. 
Because the left halves of these sets are nested, the iterative construction of 
Theorem 5.1 leads to the single infinite set $X=\{0,2,6,7,8,9,14,15,16,17,22,23,24,25,\dots \}$. 
This single set provides antisymmetric witnesses for $12+8\N$: just take the 
appropriate initial segment and antisymmetrize it. 
The difference sequence of $X$ (assuming 0 is in the set) is almost periodic: 
$2,4,\overline{1,1,1,5}$, where the bar indicates repetition. 
This finitary method of witnessing infinitely many numbers in $L_m$ will occur 
whenever we have an RP: there will be a single almost periodic difference sequence, which defines an
infinite set $X$ that is a union of finitely many arithmetic progressions. 
Using $(-1)^m$ symmetrization on appropriate initial segments of $X$
will yield $(-1)^m$ symmetric witnesses for infinitely many values in $L_m$.

So we can prove Theorems 3.2 and 3.3 by finding RPs for the needed cases. 
It took several weeks, but the ILP method with the various constraints succeeded in 
finding all the required pairs. 
For each $m\leq 6$ the asymptotic result follows from a single RP (Table 4). 
But it took eight RPs to cover $L_7$. 
Doubling the $m=7$ case covers almost all of $L_8$, and the characterization 
of that case is completed by finding seven additional 8-equipowerful sets, shown in Table 5.

In order to give all of the data needed for our theorems in a small amount of space,
we will encode the polynomials in hexadecimal as follows.
Convert the characteristic function of $A\subseteq [n]$ into a binary string,
padded on the right with 0s so that the bit count is a multiple of 4, and then convert to hex.

For example, the RP for $(12,20)$ is defined by $\{0,2,6,7,8,9,14,15,16,18\}\subset [20]$. 
We need only consider the left half $X=\{0,2,6,7,8,9\}$, as the full set is the 
antisymmetrization of $X$. 
The corresponding bit-string, padded to 12 bits, is 
$1010\hspace{0.05in} 0011\hspace{0.05in} 1100$. 
The hex version of this is $\text{A3C}$.

Table 4 shows all the needed RPs for the asymptotic results (the $m=8$ case follows 
by just doubling the sets from $m=7$); the first four cases include the sets. 
Table 5 shows individual examples for the cases not covered by the pairs. 
In all cases only the left half of the sets is encoded, as the full set is obtainable by $(-1)^m$ symmetrization.

\begin{table}[ht]\label{bigRPlist}
$
\begin{array}{|l|l|l|} \hline
m  & \text{lengths}      & \text{hex code for regenerative pair} \\ \hline
3  & 12 , 20   & \text{A3C}  \hfill                                \{0,2,6,7,8,9\} \\ \hline
\{
4  & 24 , 32   & \text{995C}  \hfill                                   \{0,3,4,7,9,11,12,13\} \\ \hline
5  & 48 , 64   & \text{A4DD233C}  \hfill               \{0,2,5,8,9,11,12,13,15,18,22,23,26,27,28,29\} \\ \hline
5  & 56 , 72   & \text{96A371999}  \hfill          \{0\,3\,5,6,8,10,14,15,17,18,19,23,24,27,28,31,32,35\} \\ \hline
6  & 104 , 120 & \text{C32F8696687E155} \\ \hline
6  & 112 , 128 & \text{936D342C73B58A0F} \\ \hline
7  & 208 , 224 & \text{A55A9936CCA5363A65E41B6CC35A} \\ \hline
7  & 200 , 328 & \text{9559B51655655553755459555A31ED24F651A6C0F} \\ \hline
7  & 216 , 344 & \text{953D547143575D547554D551077417ED3C9A6982C37} \\ \hline
7  & 232 , 360 & \text{95555D95510D77154D75994531D714F238C7EB8831D1D} \\ \hline
7  & 248 , 376 & \text{C557185D75534395571D14715DC7534EC412F9C933CB199} \\ \hline
7  & 264 , 392 & \text{D29535559352D575954B549555A54B555C968936B1BDC6538} \\ \hline
7  & 280 , 408 & \text{A8ADA3CAE2B88E2B9AC32CAE2B8EE2B8AC347D930FE4826A3C7} \\ \hline
7  & 296 , 424 & \text{9996356B5496555555555E925945DA7532555477C255A9AA6587A} \\ \hline
7  & 312 , 440 & \text{A56A69555555B59550965AD5B71515554E56725A64EA946FB10D563} \\ \hline
\end{array}
$
\vspace*{0.1in}
\caption{RPs that suffice, asymptotically, to give all witnessing sets for $L_m, m \le 8$.}
\end{table}

\begin{table}[ht]\label{sporadics}
$
\begin{array}{|ll|l|}\hline
m  &   n    & \text{hex code for witness to} \quad n \in L_m                                                 \\ \hline
5  &  40    & \text{C1EC9} \hfill \{0,1,7,8,9,10,12,13,16,19\}                                                 \\ \hline
6  &  48    & \text{C27D8C}  \hfill \{0,1,6,9,10,11,12,13,15,16,20,21\}                                        \\ \hline
6  &  72    & \text{998EAA3C5} \quad \hfill \{0,3,4,7,8,12,13,14,16,18,20,22,26,27,28,29,33,35\}                     \\ \hline
6  &  88    & \text{96C362DE4A3} \; \hfill                                                                 \\ \hline
%6  &  88    & \text{96C362DE4A3} \; \hfill \{0,3,5,6,8,9,14,15,17,18,22,24,25,27,28,29,30,33,36,38,42,43\}     \\ \hline
8  &  144   & \text{A962D5AF05357231CD}                                                                      \\ \hline
8  &  208   & \text{C5A2CB6B945B28BC43ADB6586C}                                                              \\ \hline
8  &  240   & \text{8DE3424EBD39684AD636AD8932C4F9}                                                          \\ \hline
8  &  272   & \text{D0EC315B66B4DE6110D339D63B4CD61BC4}                                                      \\ \hline
8  &  304   & \text{A9E21ACCB5C96794C56D9C4255B4336DEAD8A4}                                                  \\ \hline
8  &  336   & \text{92D6F430CB2AD476994F60DEB609EA1158973CA7D2}                                              \\ \hline
8  &  368   & \text{B4D121ABDED920EAF02CE76086B55BA6995ABE9A43247A}                                          \\ \hline
9  &  192   & \text{C1BE1E21CD63D295A7887A59}                                                                \\ \hline
9  &  240   & \text{9666995C93C3AA5935E81CB7C2938D}                                                         \\ \hline
\end{array}
$
\vspace*{0.1in}
\caption{Sporadic examples.}
\end{table}

The method of proof using RPs leads to a surprising amount of structure in $L_m$ when $m\leq 7$. 
Consider the trivial fact that, for $L_2$, the symmetry result can be proved by a single set. 
Just let $X=\N$ and get the witness for $4\, k$ by taking the first $\,
k$ entries in $X$ and symmetrizing the result. 
For $L_3$, we have two RPs that yield antisymmetric witnesses for all cases: 
the 12/20 case and the 8/16 case.  For $L_4$ a single RP takes care of $24+8\N$ 
and that leaves only the singleton 16, which, by the example following Definition~\ref{RPdef},
can be viewed as the first half of an RP. If we ignore the small number of exceptions, 
we see from Table 4 that up to $m=6$ we have at most two RPs that cover $L_m$. 
At $m=7$ we need nine: eight to cover the numbers that do not arise by doubling $L_6$, 
and one more to cover those that do arise by such doubling. 
However, for $m=8$ we were unable to find any RPs after trying several cases for the first few values. 
So we can ask whether this covering set of finitely many RPs always exists.
As pointed out after Corollary 5.2, this structure means that there are finitely many sets $X_i$, 
each of which is a union of arithmetic progressions, so that the appropriate symmetrization of 
initial segments of the $X_i$ lead to $(-1)^m$ symmetric witnesses for all $n\in L_m$, with finitely 
many exceptions. 
Though, by Example 2 after Definition 6.2, any LP is the beginning of a trivial RP, so that
perhaps these aren't truly exceptions.

\section{Conclusion and Questions}\label{questions}

It is easy to make conjectures based on the patterns observed in data. 
This area is remarkable for the number of such guesses that have turned out to be wrong. 

For instance, looking at $L_m$ for $m\leq 5$ suggests the natural idea that the Thue--Morse polynomial 
$\tau_m$ will be the order-$m$ LP of smallest length, so that $2^m=\min  L_m$. 
This was disproved by Skachek \cite{skachek} and Boyd \cite{boyd1}, who found $48\in L_6$. 
Another conjecture arising from $m\leq 5$ is that $\tau_m$ is the unique (up to sign) LP in 
$\calP (2^m,m)$; this fails because, again by Boyd, 
$\calP (64,6)$ has three LPs (up to sign). 
Increasing $m$, one sees that for $m\leq 9$, $\min  L_m>2^{m-1}$, and one might be tempted
to guess that this is always true.  Although nothing in our data immediately contradicts this, note
that if $f\in \calP (144,8)$, then by Multiplication in Theorem~\ref{basic},
$f \# f\in \calP (144^2,16)$ and $2^{15}>144^2\in L_{16}$.

At one point it seemed natural to ask whether $2^{m-1}$ was never in $L_m$.
Richard Stong found a clever way to combine known elements of various $\calP(n,m)$ to disprove this; here,
with his kind permission, is his result. 

\begin{thm}\label{stong}
$\calP (2^{51},52)\neq \varnothing$.
\end{thm}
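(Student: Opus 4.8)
The plan is to build an LP in $\calP(2^{51},52)$ by combining LPs of modest length and modest order using the Multiplication rule from Theorem~\ref{basic}. Recall that Multiplication says $f\#g\in\calP(nn',m+m')$ when $f\in\calP(n,m)$ and $g\in\calP(n',m')$, so if we can write $2^{51}$ as a product $n_1n_2\cdots n_k$ of lengths for which $\calP(n_i,m_i)\neq\varnothing$ and $\sum m_i\ge 52$, we are done. The challenge is that $2^{51}$ is a pure power of $2$, so every factor $n_i$ must itself be a power of~$2$; with the Thue--Morse polynomials alone, a factor $2^{a}$ contributes only order $a$, and $\sum a_i$ over a factorization $\sum a_i=51$ gives total order exactly $51$, one short. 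So the key is to find some power-of-two length $2^a$ that carries an LP of order strictly greater than $a$, i.e.\ some $n=2^a$ with $m^*(n)>a$.

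First I would locate such an anomalous length in the data: Theorem~\ref{3.3} and Table~2 give $m^*(256)=8$, and $256=2^8$, so $\calP(2^8,8)$ is \emph{not} helpful (order equals the exponent). But $192=2^6\cdot 3$ has $m^*(192)=9$, which beats the ``expected'' value $6+1=7$ coming from $\tau_6\#(1-x)$, and $144=2^4\cdot 9$ has $m^*(144)=8$, again beating $\tau_4\#\tau_2=\tau_6$-type bounds. The trick is to use these composite lengths as building blocks and then absorb their odd parts into a larger power of two. Concretely, one looks for a multiset of lengths whose product is $2^{51}$ but where the odd prime factors (here various $3$'s) cancel against each other — impossible directly — so instead one uses that $L_m$ and the orders behave well under $\#$, and one balances a factor like $192=2^6\cdot 3$ against enough copies to clear the $3$, e.g.\ $192^{\,k}$ has odd part $3^k$, which is never a power of two. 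Hence the realistic route is: take $f\in\calP(144,8)$ (Table~5 gives one explicitly) and note $144=16\cdot 9$; take $g\in\calP(144,8)$ again; then $f\#g\in\calP(144^2,16)$, and $144^2=2^8\cdot 3^4$ still is not a power of two. So direct self-multiplication does not reach a power of two either.

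The actual mechanism — and the step I expect to be the main obstacle to reconstruct — must therefore be a more clever combination: pick LPs $f_i\in\calP(n_i,m_i)$ whose lengths multiply to a power of two \emph{after} using Multiplication's associativity and the fact that $\calP$ is closed under the join/Addition as well. Since Addition lets us add lengths (keeping order), one can aim for a power-of-two \emph{sum} rather than product at one stage, then multiply. The concrete strategy: find $n$ with $n\in L_{m}$ for $m$ noticeably larger than $\log_2 n$ (the $192\in L_9$ fact is the seed, since $9-\log_2 192\approx 1.4>1$), iterate Multiplication $f\mapsto f\#f\#\cdots$ to get $\calP(192^{k},9k)$, and observe that $192^{k}=2^{6k}3^{k}$; then, separately, build an LP of the \emph{same} length $2^{6k}3^k$ but we need its length to become a genuine power of two — which forces us to instead combine a $3^k$-absorbing factor. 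The cleanest version: use that $3\cdot 2^{?}$-type identities are unavailable, so Stong's insight must be to take a single $f\in\calP(192,9)$ and form $f\#f\#\cdots\#f$ ($k$ times) to land in $\calP(192^k,9k)$, then apply Doubling/Thue--Morse padding via the join to extend to the next power of two while \emph{not losing} the surplus order. The hard part is exactly this bookkeeping — showing that one can pad $192^k$ up to $2^{\lceil k\log_2 192\rceil}$ by joining with Thue--Morse blocks of suitable orders so that the total order still exceeds the exponent — and choosing $k$ (here $k$ around $7$ or so, since $7\cdot 9=63$ and $7\log_2 192\approx 53$, giving $2^{51}$ after trimming) so that the arithmetic $9k > k\log_2 192$ has enough slack to survive the padding losses and land precisely at exponent $51$, order $\ge 52$.

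I would close by verifying the single explicit numerical inequality that makes $k$ work — roughly, that $9k-52\ge (\text{padding cost})$ when $192^{k}\le 2^{51}<192^{k+1}$ — and then citing the Addition, Multiplication, Doubling, and Thue--Morse parts of Theorem~\ref{basic} together with the explicit witness $f\in\calP(192,9)$ from Table~5 to assemble the final LP in $\calP(2^{51},52)$. The only genuinely delicate point is the padding argument; everything else is routine application of the rules already established.
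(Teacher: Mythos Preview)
Your proposal identifies the right raw materials---Multiplication, Addition, and the anomaly $192\in L_9$ (and $144\in L_8$) where the order exceeds $\log_2 n$---but the step you label ``the only genuinely delicate point'' is in fact a genuine gap, not merely delicate.

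The problem is the padding. Addition (the join) yields $f\lor g\in\calP(n+n',\min(m,m'))$: it does not trade order for length, it simply takes the minimum. So to join something onto $f^{\#k}\in\calP(192^k,9k)$ and keep order $\ge 52$, the padding block itself must already have order $\ge 52$. A Thue--Morse block of that order has length $2^{52}>2^{51}$, so ``Thue--Morse padding'' cannot work. Doubling is no help either: it sends $\calP(n,m)$ to $\calP(2n,m+1)$, and $2^j\cdot 192^k$ is never a pure power of~$2$ for $k\ge 1$. So neither of the two mechanisms you invoke can close the gap between $192^k$ and $2^{51}$ while preserving order~$52$.

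What the paper actually does is different in kind. It builds not one but \emph{several} expanded products, each of order exactly~$52$, using a mix of known building blocks ($p_{48,6},p_{112,7},p_{144,8},p_{192,9},p_{208,8},p_{272,8},p_{16,4},p_{8192,13}$), and then solves a subset-sum problem: it finds four such products whose lengths add to exactly $2^{51}$, and joins them. Concretely, the four lengths are
\[
144^2\cdot 192^4,\quad 48^3\cdot 112\cdot 192^3,\quad 16\cdot 112^2\cdot 208\cdot 272\cdot 192^2,\quad 112\cdot 208^4\cdot 8192,
\]
each carrying order $52$, and these four integers sum to $2^{51}$. The search that finds this decomposition (framed in the paper as finding rationals $n/2^m$ summing to $1/2$) is the real content; your proposal does not arrive at, or suggest, this subset-sum formulation.
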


\begin{proof}
Let $p_{n,m}$ be any element of $\calP (n,m)$ and let $p^{\#j}$ 
denote $p\# p\#\, \cdot \cdot \cdot \, \#\, p$, with $j$ terms. 
Recalling that $p_{n,m}\#p_{n',m'}\in \calP (n n',m+m')$, we can get four large LPs (in fact, gigantic
compared to anything discussed earlier) as follows:
\begin{align*}
 a & =p_{144,8}^{\text{$\#$2}}\#p_{192,9}^{\text{$\#$4}}=p_{28179280429056, 52}\\
 b & =p_{48,6}^{\text{$\#$3}}\# p_{112,7}\# p_{192,9}^{\text{$\#$3}}=p_{87668872445952, 52} \\
 c & =p_{16,4}\# p_{112,7}^{\text{$\#$2}}\# p_{208,8} \# p_{272,8}\#p_{192,9}^{\text{$\#$2}}=p_{418591807635456,
52} \\
 d & =p_{112,7}\# p_{208,8}^{\text{$\#$4}}\# p_{8192,13}=p_{1717359853174784, 52} \\
\end{align*}
Since
\[
 28179280429056+87668872445952+418591807635456+1717359853174784=2^{51},
\]
it follows that $a\lor b\lor c\lor d=p_{2^{51},52}$, as desired.
\end{proof}

Note that all terms in the $2^{51}$ equation are divisible by $2^{32}$. 
This relation was found by compiling a useful list of rational numbers $n/2^m$ for which 
$\calP (n,m)\neq \varnothing$, and then doing a search (in \textit{Mathematica}) 
for a subset that summed to $1/2$. 

Allouche and Shallit in \cite[Open Problem 6.12.5]{shallit} raise the question of whether, roughly,
$\tau_m$ has the smallest ``error'' of all elements $f(x) = \sum a_ix^i$
of $\calP(2^{m-1},m)$.  This error is defined to be the (absolute value of the) $m$th moment
of the corresponding set bipartition, which is
\[
f^{(m)}(1) = \sum a_i \, i(i-1) \ldots (i-m+1) = \sum a_i \, i^m,
\]
where the last equality holds because all of the smaller moments (and corresponding derivatives) are 0.
(In particular, the $m$th moment is the first nonzero moment of $\tau_m$.)
From $\tau_m(x) = (1-x)(1-x^2)\dots (1-x^{2^{m-1}})$ one works out that
\[
\tau_m^{(m)}(1) = (-1)^m m! \, 2^{0+1+\ldots +(m-1)} = (-1)^m m! \, 2^{m(m-1)/2}.
\]
However, Theorem~7.1 gives a remarkably definitive answer to the Open Problem: the $p_{2^{51},52}$ constructed
in the theorem has $m$th moment equal to~$0$.

Given the sobering record of failed guesses and conjectures discussed above, caution is in order.
So we end with a list of questions, and are only willing to label the first as a conjecture.
\begin{itemize}
\item (Symmetry Conjecture) If $\calP (n,m)\neq \varnothing$, then it contains a $(-1)^m$ symmetric LP. 
(For $m\leq 8$, this holds by Theorem 3.2.) %\vspace*{0.5in}
\item Is it the case that, for each $m$, there is a finite family of RPs that provides 
$(-1)^m$ symmetric witnesses for each entry in $L_m?$ 
(For $m\leq 7$, the answer is yes from the proof of Theorem 3.2.) %\vspace*{0.5in}
\item If $n=\min  L_m$, are all $f\in \calP (n,m)$ $(-1)^m$, with maximal order $m$, $(-1)^m\,$symmetric? 
(This is true for $m\leq 6$.) %\vspace*{0.5in}
\item What is $m^*(240)$? Is $240\in L_{10}$?   We can use ILP to show that there is 
no symmetric LP in $\calP (240,10)$, so the Symmetry Conjecture would imply that 
$m^*(240)=10$. 
Is $272\in L_9$? What is the smallest $k$ so that $k+16\N\subset L_9$?
\end{itemize}

\bibliography{main}
\bibliographystyle{plain}

\end{document}